\makeatletter\@addtoreset{equation}{section} \makeatother
\newtheorem{theorem}{Theorem}[section]
\newtheorem{proposition}[theorem]{Proposition}%[section]
\newtheorem{corollary}[theorem]{Corollary}%[section]
\theoremstyle{remark}
\newtheorem{remark}[theorem]{Remark}%[section]
\theoremstyle{definition}
\newtheorem{definition}[theorem]{Definition}%[section]
\newtheorem{exmp}[theorem]{Example}%[section]
\makeatletter \setlength{\parindent}{2em}
\title{Random Locations of Periodic Stationary Processes}   %title
\author{Jie Shen, Yi Shen, Ruodu Wang}
\date{{\small \sc Department of Statistics and Actuarial Science\\
University of Waterloo}\\%~\\\today
}
\providecommand{\keywords}[1]{\textbf{\textit{Keywords: }} #1}
\begin{document}
\maketitle
\begin{abstract}
%%abstract
We consider a family of random locations, called intrinsic location functionals, of periodic stationary processes. This family includes but is not limited to the location of the path supremum and first/last hitting times. We first show that the set of all possible distributions of intrinsic location functionals for periodic stationary processes is the convex hull generated by a specific group of distributions. We then focus on two special subclasses of these random locations. For the first subclass, the density has a uniform lower bound; for the second subclass, the possible distributions are closely related to the concept of joint mixability.
\end{abstract}
\keywords{periodic stationary process, random location, joint mixability}
\section{\textbf{Introduction}}
Random locations of stationary processes have been studied for a long time, and various results exist for special random locations and processes. For example, the results regarding the hitting time for Ornstein-Uhlenbeck processes date back to Breiman's paper in 1967 \cite{breiman1967}, with recent developments made by Leblanc et al. \cite{leblanc2000correction} and Alili et al. \cite{alili2005representations}. Early discussions about the location of path supremum over an interval can be found in the work of Leadbetter et al \cite{leadbetter1983extremes}. The book by Lindgren \cite{lindgren2012stationary} provides an excellent summary of general results in stationary processes.

Recently, properties of possible distributions of the location of the path supremum have been obtained, and the sufficiency of the properties was proven \cite{samorodnitsky2012distribution, samorodnitsky2013location}. In \cite{samorodnitsky2013intrinsic}, Samorodnitsky and Shen proceeded to introduce a general type of random locations called intrinsic location functionals, including but also extending far beyond the random locations mentioned above. In \cite{shen2016random}, equivalent representations of intrinsic location functionals were established using partially ordered random sets and piecewise linear functions.

In this paper, we study intrinsic location functionals of periodic stationary processes, and characterize all the possible distributions of these random locations. The periodic setting leads to new properties along with challenges, which are the focus of this paper. The periodicity also adds a discrete flavor to the problem, which, surprisingly, suggests a link with other well-studied properties such as joint mixability \citep{wang2016joint}.

The motivation of this work is twofold. From the general theoretical perspective, since the study of continuous-time stationary processes requires a differentiable manifold structure to apply analysis techniques as well as a group structure to define stationarity, the most general and natural framework under which the random locations of stationary processes can be considered is an Abelian Lie group. It is well known that any connected Abelian Lie group can be represented as the product of real lines and one-dimensional torus, \textit{i.e.}, circles. In other words, the real line $\mathbb R$ and one-dimension circle $S_1$ are building blocks for connected Abelian Lie groups. Therefore, in order to understand the properties of  random locations of stationary processes in the general setting, it is crucial to study their behaviors on $\mathbb R$ and $S_1$ first. While the case for $\mathbb R$ was done in \cite{samorodnitsky2013location}, this paper deals with the circular case, which is equivalent to imposing a periodic condition on the stationary processes over the real line.

A more specific motivation comes from a problem in the extension of the so-called ``relatively stationary process''. A relatively stationary process is, briefly speaking, a stochastic process only defined on a compact interval, the finite dimensional distribution of which is invariant under translation, as long as all the time indices in the distribution remain inside the interval. Parthasarathy and Varadhan \cite{parthasarathy1964extension} showed that a relatively stationary process can always be extended to a stationary process over the whole real line. A question to ask as the next step is when such an extension can be periodic. Equivalently, if the relatively stationary process is defined on an arc of a circle instead of the compact interval on the real line, can it always be extended to a stationary process over the circle? This paper will provide an answer to this question.

The rest of the paper is organized as follows. In Section $2$, we introduce some notation and assumptions for intrinsic location functionals and stationary and ergodic processes. In Section $3$, we show some general results on intrinsic location functionals of periodic stationary processes. Sufficient and necessary conditions are established to characterize the distributions of these random locations. The following two sections are devoted to two special types of intrinsic location functionals. In Section $4$, the class of \emph{invariant intrinsic location functionals} is studied. The density of any invariant intrinsic location functional has a uniform lower bound, and such a distribution can always be constructed via the location of the path supremum over the interval. In Section $5$, we show that the density of a \emph{first-time intrinsic location functional} is non-increasing, and establish a link between the structure of the set of first-time intrinsic locations' distributions and the joint mixability of some distributions.

\section{\textbf{Notation and preliminaries}}
Throughout the paper, $\mathbf X=\{X(t),~t\in \mathbb R\}$ will denote a periodic stationary process. Without loss of generality, assume  $\mathbf X$ has period  $1$. Moreover, for simplicity, we assume the sample function $X(t)$ is continuous unless specified otherwise. Indeed, all the arguments in the following parts also work for $\mathbf X$ with c\`{a}dl\`{a}g sample paths.

As mentioned in the Introduction, an equivalent description of a periodic stationary stochastic process is a stationary process on a circle. That is, consider $\{X(t), ~t\in \mathbb R\}$ as a process defined on $S_1$, where $S_1$ is a circle with perimeter $1$.

Let $H$ be a set of functions on $\mathbb{R}$ with period $1$, and assume it is invariant under shifts. The latter means that for all $g\in H$ and $c\in \mathbb {R}$, the function $\theta_{c}g(x):= g(x+c)$, $x\in \mathbb{R}$ belongs to $H$. We equip $H$ with its cylindrical $\sigma$-field. Let $\mathcal{I}$ be the set of all compact, non-degenerate intervals in $\mathbb{R}$: $\mathcal{I}=\{[a,b]: a<b,~ [a,b] \subset \mathbb R\}$.
We first define intrinsic location functionals, the primary object of this paper.

%%definition of intrinsic location functions
\begin{definition}\label{def:ilf}
\citep{samorodnitsky2013intrinsic} A mapping $L$: $H\times \mathcal{I} \to \mathbb{R}\cup \{\infty\}$ is called an \emph{intrinsic location functional}, if it satisfies the following conditions:
\begin{enumerate}
\item For every $I\in \mathcal{I}$, the mapping $L(\cdot,I):H\to \mathbb{R}\cup\{\infty\}$ is measurable.
\item For every $g\in H$ and $I\in \mathcal{I}$, $L(g,I)\in I\cup\{\infty\}$.
\item (Shift compatibility) For every $g\in H$, $I\in \mathcal{I}$ and $c\in \mathbb{R}$,
\begin{align*}
L(g,I)=L(\theta_c g,I-c)+c,
\end{align*}
where $I-c$ is the interval $I$ shifted by $-c$, and by convention, $\infty+c=\infty$.
\item (Stability under restrictions) For every $g\in H$ and $I_1$, $I_2\in \mathcal{I}$, $I_2\subseteq I_1$, if $L(g,I_1)\in I_2$, then $L(g,I_2)=L(g,I_1)$.
\item (Consistency of existence) For every $g\in H$ and $I_1$, $I_2\in \mathcal{I}$, $I_2\subseteq I_1$, if $L(g,I_2)\neq \infty$, then $L(g,I_1)\neq \infty$.
\end{enumerate}
\end{definition}

All the conditions in Definition \ref{def:ilf} being natural and general, the family of intrinsic location functionals is a very large family of random locations, including and extending far beyond the location of the path supremum/infimum, the first/last hitting times, the location of the first/largest jump, \textit{etc}.

\begin{remark}
$\infty$ is added to the range of the intrinsic location functionals to deal with the issue that some intrinsic location functionals may not be well defined for certain paths in some intervals. The $\sigma$-field on $\mathbb R\cup \{\infty\}$ is then given by treating $\{\infty\}$ as a separate point and taking the $\sigma$-field generated by the Borel sets in $\mathbb R$ and $\{\infty\}$.
\end{remark}

It turns out that with the presence of a period, the relation between stationary processes and ergodic processes plays a crucial role in analyzing the distributions of the random locations. Let $(\Omega, \mathcal F, \mathbb P)$ be a probability space. Recall that a measurable function $f$ is called \emph{$T$-invariant} for a measurable mapping $T: \Omega\to\Omega$, if
\begin{align*}
f(T\omega)=f(\omega) \quad {\mathbb P}\text{-almost surely}.
\end{align*}
For a stationary process $\mathbf X=\{X(t),~t\in\mathbb R\}$, let $\tilde{\Omega}$ be its canonical space equipped with the cylindrical $\sigma$-field $\tilde{\mathcal F}$, and $\theta_t$ be the shift operator as defined earlier. That is,
\begin{align*}
\theta_t\tilde{\omega}(s)=\tilde{w}(s+t), \text{ for }\tilde{\omega}\in \tilde{\Omega}.
\end{align*}
Denote by  $\mathbb P_{\mathbf X}(\cdot)=\mathbb P(\mathbf  X\in \cdot)$ the distribution of $\mathbf X$ on $(\tilde{\Omega},\tilde{\mathcal F})$. A stationary process $\{X(t),~t\in \mathbb R\}$ is called ergodic, if each measurable function $f$ defined on $(\tilde{\Omega},\tilde{\mathcal F})$ which is $\theta_t$-invariant for every $t$ is constant $\mathbb P_\mathbf{X}$-almost surely.

It is known that the set of the laws of all stationary processes is a convex set and the extreme points of this set are the laws of the ergodic processes. Thus, we have the ergodic decomposition for stationary processes:
\begin{theorem}\label{thm5}
(Theorem A.1.1, Kifer \citep{kifer1988ergodic}) Let $\mathcal M$ be the space of all stationary probability measures, and $\mathcal M_e$ the subset of $\mathcal M$ consisting of all ergodic probability measures. Equip $\mathcal M$ and $\mathcal M_e$ with the natural $\sigma$-field: $\sigma(\mu\to \mu(A): A\in \mathcal F)$. For any stationary probability measure $\mu_\mathbf{X}\in \mathcal M$, there exists a probability measure $\lambda$ on $\mathcal M_e$ such that
\begin{align*}
\mu_{\mathbf{X}}=\int_{\rho \in \mathcal M_e}\rho \mathrm{d}\lambda.
\end{align*}
\end{theorem}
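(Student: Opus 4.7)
The plan is to realize the ergodic decomposition as a Choquet-type integral representation of the stationary measure over the extreme points of $\mathcal M$. First I would fix a convenient topological setting: since $\mathbf X$ is continuous (or càdlàg), $\tilde\Omega$ can be taken as a Polish space, and $\mathcal M$ becomes a convex subset of the locally convex space of signed Borel measures on $\tilde\Omega$ endowed with the weak topology. On this space, $\mathcal M$ is closed under convex combinations and is a Choquet simplex; restricting attention to shift-invariant probability measures gives a convex, metrizable set on which Choquet's theorem will be applicable.

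The heart of the argument is identifying the extreme points of $\mathcal M$ with $\mathcal M_e$. For one direction, if $\mu\in\mathcal M$ is not ergodic, there exists a shift-invariant set $A\in\tilde{\mathcal F}$ with $0<\mu(A)<1$, and then $\mu = \mu(A)\,\mu(\cdot\mid A) + \mu(A^c)\,\mu(\cdot\mid A^c)$ writes $\mu$ as a nontrivial convex combination of two elements of $\mathcal M$, since conditioning on an invariant set preserves stationarity; hence $\mu$ is not extreme. For the converse, if $\mu\in \mathcal M_e$ is written as $\mu=\alpha\mu_1+(1-\alpha)\mu_2$ with $\mu_1,\mu_2\in\mathcal M$, then $\mu_1\ll\mu$, and its Radon--Nikodym derivative is a $\theta_t$-invariant measurable function for every $t$; by ergodicity it must be a.s.\ constant, forcing $\mu_1=\mu$, so $\mu$ is extreme.

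Having established that the extreme points of $\mathcal M$ are exactly $\mathcal M_e$, I would conclude via Choquet's representation theorem: each $\mu_{\mathbf X}\in\mathcal M$ admits a probability measure $\lambda$ supported on the extreme boundary such that $\mu_{\mathbf X}=\int_{\rho\in\mathcal M_e}\rho\,\mathrm d\lambda$, which is the desired formula. The measurability requirements for the integral are handled by equipping $\mathcal M$ and $\mathcal M_e$ with the natural $\sigma$-field generated by the evaluation maps $\mu\mapsto\mu(A)$, $A\in\tilde{\mathcal F}$, as specified in the statement; one checks that under this $\sigma$-field $\mathcal M_e$ is measurable and the map $\rho\mapsto\int f\,\mathrm d\rho$ is measurable for every bounded measurable $f$, which is what is needed to interpret the integral.

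The main obstacle is that Choquet's theorem in its cleanest form requires metrizability and compactness (or at least a Bauer/Choquet simplex structure) of $\mathcal M$, and $\mathcal M$ is typically not compact without further tightness restrictions. An alternative, and arguably more robust, route avoids this issue: one uses Birkhoff's pointwise ergodic theorem to obtain, for each bounded measurable $f$, the invariant limit $\bar f(\tilde\omega)=\lim_{T\to\infty} T^{-1}\int_0^T f(\theta_t\tilde\omega)\,\mathrm dt$, then defines the random ergodic component $\rho_{\tilde\omega}$ as the regular conditional distribution of $\mathbf X$ given the invariant $\sigma$-field $\mathcal J\subseteq\tilde{\mathcal F}$. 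Setting $\lambda=\mu_{\mathbf X}\circ(\tilde\omega\mapsto\rho_{\tilde\omega})^{-1}$ on $\mathcal M_e$ gives the desired decomposition. The delicate point in this route is showing that $\rho_{\tilde\omega}\in \mathcal M_e$ for $\mu_{\mathbf X}$-a.e.\ $\tilde\omega$, which uses the $0$--$1$ law for $\mathcal J$-measurable sets under the conditional measure; this is where I would expect to spend the most care.
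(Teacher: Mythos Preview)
The paper does not supply its own proof of this statement: it is quoted verbatim as a classical result from \cite{kifer1988ergodic} and used without argument. So there is nothing in the paper to compare your proposal against.

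That said, your sketch is a faithful outline of the standard proofs. The identification of $\mathcal M_e$ with the extreme points of $\mathcal M$ is correct and argued in the usual way. You are right to flag compactness as the weak point of the pure Choquet route; on a non-compact $\mathcal M$ one typically works instead on the canonical path space with a Polish topology and uses the existence of regular conditional probabilities given the invariant $\sigma$-field, exactly as in your second route. In that approach the step you single out as delicate---that $\rho_{\tilde\omega}\in\mathcal M_e$ for $\mu_{\mathbf X}$-a.e.\ $\tilde\omega$---is indeed the crux, and is handled by showing that for each invariant set $A$ one has $\rho_{\tilde\omega}(A)=\mathbb I_A(\tilde\omega)\in\{0,1\}$ almost surely, using a countable generating family for the invariant $\sigma$-field to upgrade this to a single null set. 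Your plan is sound; just be aware that for the purposes of this paper no proof is expected, only the citation.
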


The following proposition shows that for periodic stationary processes, ergodicity simply means that all the paths are the same up to translation. This simple fact will be used later in showing the main results of this paper.

We say a probability space $(\Omega, \mathcal F, \mathbb P)$ can be extended to a probability space $(\tilde{\Omega}, \tilde{\mathcal F}, \tilde{\mathbb P})$, if there exists a measurable mapping $\pi$ from $(\tilde{\Omega}, \tilde{\mathcal F})$ to $(\Omega, \mathcal F)$ satisfying $\mathbb P=\tilde{\mathbb P}\circ \pi^{-1}$. In this case, the process $\tilde{\mathbf X}$ defined on $(\tilde{\Omega}, \tilde{\mathcal F}, \tilde{\mathbb P})$ by $\tilde{\mathbf X}(\tilde \omega)=\mathbf X(\pi(\tilde{\omega}))$ will be identified with the original process $\mathbf X$.
%In the following proposition, we regard a stochastic process $\mathbf X$ as originally defined as the coordinate process on its canonical space. Note that if $\mathbf X$ is continuous and periodic with period 1, then its paths can be identified with functions in $C([0,1])$ having the same value at $0$ and $1$, and that the Borel $\sigma-$field and the cylindrical $\sigma-$field coincide on $C([0,1])$.
\begin{proposition}\label{cha}
For any continuous periodic ergodic process $\mathbf{X}$ with period $1$, there exists a deterministic function $g$ with period $1$, such that $X(t)=g(t+\tilde U)$ for $t\in \mathbb R$ almost surely on an extended probability space, in which $\tilde U$ follows a uniform distribution on $[0, 1]$.
\end{proposition}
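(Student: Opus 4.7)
The plan is to combine Birkhoff's ergodic theorem with path-level periodicity to force the law of $\mathbf X$ to concentrate on a single shift-orbit in the space of continuous $1$-periodic functions; the uniformity of $U$ will then read off automatically from the orbit parametrization.

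I would first apply the continuous-time ergodic theorem to $\mathbf X$ and a bounded measurable path functional $F$, yielding
\begin{align*}
\frac{1}{T}\int_0^T F(\theta_s \mathbf X)\,ds \;\longrightarrow\; \mathbb E[F(\mathbf X)]\quad\text{a.s.}
\end{align*}
Because the sample path is itself $1$-periodic we have $\theta_{s+1}\mathbf X=\theta_s\mathbf X$, so $s\mapsto F(\theta_s\mathbf X)$ is $1$-periodic and the running time-average equals the one-period average for every $T$. Hence $\int_0^1 F(\theta_s\mathbf X)\,ds = \mathbb E[F(\mathbf X)]$ a.s. Taking a countable family $\{\mathbf{1}_{A_n}\}$ that generates the cylindrical $\sigma$-field on the path space $H_{\mathrm{per}}$ of continuous $1$-periodic functions, I collect the null sets and obtain, on a single event of full probability, the identity of random measures
\begin{align*}
\mu_{\mathbf X}:=\int_0^1 \delta_{\theta_s\mathbf X}\,ds \;=\; \mathbb P_{\mathbf X}.
\end{align*}

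Now $\mu_{\mathbf X}(\omega)$ is supported on the compact orbit $O(\omega)=\{\theta_s\mathbf X(\omega):s\in[0,1)\}$, and since it equals the deterministic measure $\mathbb P_{\mathbf X}$, while any two shift-orbits in $H_{\mathrm{per}}$ are either identical or disjoint, these orbits must a.s.\ coincide with a single fixed orbit $O=\{\theta_s g:s\in[0,1)\}$ (with $g$ continuous and $1$-periodic). Reading this back, $\mathbb P_{\mathbf X}=\int_0^1 \delta_{\theta_s g}\,ds$ is precisely the law of $\{g(t+U):t\in\mathbb R\}$ when $U\sim U[0,1]$, which is the assertion of the proposition.

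The principal technical hurdle is upgrading the ergodic identity from ``for each bounded $F$, a.s.'' to ``a.s., for every $F$ in a generating family simultaneously,'' for which one needs joint $(s,\omega)$-measurability of $F(\theta_s\omega)$ and a countable generating family; both are available because $H_{\mathrm{per}}$ under the sup norm is Polish and the sample paths are continuous. A minor bookkeeping issue is the non-uniqueness of the representative $g$ (and of $U$ modulo the minimal period $1/k$ of $g$, which by continuity must be of the form $1/k$ for some positive integer $k$), but since only existence is asserted, any valid $g$ suffices, with an independent uniform draw on $\{0,1/k,\ldots,(k-1)/k\}$ absorbing the remaining ambiguity if one wishes to realize $U\sim U[0,1]$ on the original probability space.
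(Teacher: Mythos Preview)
Your argument is sound and reaches the conclusion by a genuinely different route than the paper. One small slip: it is not true that ``the running time-average equals the one-period average for every $T$''; for a $1$-periodic integrand this holds only for integer $T$. What you actually need (and what suffices) is that the running average \emph{converges} to the one-period average as $T\to\infty$, which then matches the ergodic-theorem limit and gives $\int_0^1 F(\theta_s\mathbf X)\,ds=\mathbb E[F(\mathbf X)]$ a.s.

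The paper's proof is more elementary: it bypasses Birkhoff entirely and works directly with the $0$--$1$ law for shift-invariant events. It considers the invariant sets $H^0_{A^n}=\{g:\theta_c g\in H_{A^n}\text{ for some }c\}$ built over cylinder events $H_{A^n}$, notes that ergodicity forces $\mathbb P(\mathbf X\in H^0_{A^n})\in\{0,1\}$, and argues by contradiction that if no single orbit carried all the mass, two disjoint such invariant events would each have positive probability. It then handles uniformity of $U$ separately via the smallest period $d$ of $g$ and stationarity. Your approach is cleaner conceptually---identifying $\mathbb P_{\mathbf X}$ with the orbit measure $\int_0^1\delta_{\theta_s g}\,ds$ delivers the uniformity of $U$ essentially for free---at the cost of invoking the ergodic theorem and the Polish-space bookkeeping needed to upgrade countably many a.s.\ identities to a single full-measure event. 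The paper's argument stays closer to first principles and needs no such upgrading, but must argue uniformity as an extra step.
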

\begin{proof}
Let $C_1(\mathbb R)$ be the space of continuous functions with period 1. For $h\geq 0$, define set $B_h:=\{g\in C_1(\mathbb R): \sup_{t\in \mathbb R}|g(t)|\leq h\}$. Note that $B_h$ is in the invariant $\sigma$-algebra, and hence by ergodicity, $\mathbb P(\mathbf X\in B_h)$ is either $0$ or $1$ for any $h$. Consequently, there exists $h_0$ (depending on $\mathbf X$) such that $\mathbb P(\mathbf X\in B_{h_0})=1$.

Similarly, for function $\delta: [0,\infty)\to[0,\infty)$, define set
$$
C_\delta:=\{g\in C_1(\mathbb R): |g(x)-g(y)|<\varepsilon \text{ for any } \varepsilon>0 \text{ and all } |x-y|<\delta(\varepsilon)\},
$$
then $C_\delta$ is in the invariant $\sigma$-algebra, $\mathbb P(\mathbf X\in C_\delta)\in\{0,1\}$, and there exists function $\delta_0$ such that $\mathbb P(\mathbf X\in C_{\delta_0})=1$.

Furthermore, for any $n$, $\mathbf t=(t_1,...,t_n)$ and $\mathbf A=(A_1,...,A_n)$, where $t_1<t_2<\dots<t_n$ and $A_1,...,A_n$ are non-degenerate closed intervals, define sets
$$
H_{\mathbf t, \mathbf A}:=\{g\in C_1(\mathbb R): g(t_1)\in A_1,\dots, g(t_n)\in A_n\}
$$
and
$$
H_{\mathbf t, \mathbf A}^0:=\{g\in C(\mathbb R): \text{ there exists a} \text{ constant }c, ~\theta_c g\in H_{\mathbf t, \mathbf A}\}.
$$
Again, $H_{\mathbf t, \mathbf A}^0$ is in the invariant $\sigma$-algebra, and hence by ergodicity $\mathbb P(\mathbf X\in H_{\mathbf t, \mathbf A}^0)$ is either $0$ or $1$ for any $n$, $t_1,...,t_n$ and $A_1,...,A_n$.

For $m=0,1,...$, let $n_m=2^m$ and $t^m_i=(i-1)2^{-m}$ for $i=1,...,n_m$. Then there exists $A^m_1,...,A^m_{n_m}$ of the form $A^m_i=[k_i2^{-m}, (k_i+1)2^{-m}]$, $k_i\in \mathbb Z, i=1,...,n_m$, such that $\mathbb P(\mathbf X \in H_{\mathbf t^m, \mathbf A^m}^0)=1$, where $\mathbf t^m=(t^m_1,...,t^m_{n_m})$, $\mathbf A^m=(A^m_1,...,A^m_{n_m})$. Moreover, we can choose the sets such that $\{H_{\mathbf t^m, \mathbf A^m}^0\}_{m=0,1,...}$ form a decreasing sequence, \textit{i.e.}, $H_{\mathbf t^{m_1}, \mathbf A^{m_1}}^0\supseteq H_{\mathbf t^{m_2}, \mathbf A^{m_2}}^0$ if $m_1\leq m_2$.

Consider the sequence of sets $\{H^0_{\mathbf t^m, \mathbf A^m}\cap B_{h_0}\cap C_{\delta_0}\}_{m=0,1,...}$. Each set in this sequence is closed and consists of functions which are uniformly bounded and equicontinuous. By Arzel\`{a}-Ascoli Theorem and the fact that we are looking at functions with period 1, which can be 1-1 mapped to $\{g\in C([0,1]): g(0)=g(1)\}\subset C([0,1])$, the sets in this sequence are compact. As a result, the intersection of all the sets is non-empty. Moreover, there exists a single deterministic function with period 1, denoted by $g$, such that for any $f$ in the intersection, $f(t)=g(t+c)$ for some $c\in \mathbb R$. Indeed, assume this is not the case, i.e., there exists $f_1, f_2$ both in $H^0_{\mathbf t^m, \mathbf A^m}\cap B_{h_0}\cap C_{\delta_0}$ for all $m=0,1,...$, yet $f_1\neq \theta_cf_2$ for any $c$, then fundamental analysis shows that
$$
\inf_{c\in\mathbb R}\sup_{i\in \mathbb Z}|f_1(i2^{-m})-\theta_cf_2(i2^{-m})|\geq \frac{1}{2}\inf_{c\in\mathbb R}\sup_{t\in \mathbb R}|f_1(t)-\theta_cf_2(t)|>0
$$
for $m$ large enough, hence $f_1$ and $f_2$ will eventually be separated by some $H^0_{\mathbf t^m, \mathbf A^m}$. Thus, we conclude that $X(t)=g(t+V)$ almost surely for some random variable $V$.

The last step is to show that there exists an extended probability space and a uniform $[0,1]$ random variable $\tilde U$ defined on that space, such that $X(t)=g(t+\tilde {U})$ almost surely. First, suppose there exists a uniform [0,1] random variable $U$ in some probability space, then $\{X(t), {t\in\mathbb R}\}\stackrel{d}{=}\{g(t+U), t\in\mathbb R\}$. Indeed, since the equality is in the distributional sense, we can assume that $U$ is independent of everything else by considering, for example, the product space of the original probability space and $[0,1]$ quipped with the Borel $\sigma-$field and the Lebesgue measure. Then by stationarity and ergodicity, we have
\begin{align*}
\{X(t), t\in\mathbb R\}& \stackrel{d}{=}\{X(t+U), t\in\mathbb R\}\\
& = \{g(t+V+U), t\in\mathbb R\}\\
& \stackrel{d}{=} \{g(t+U), t\in\mathbb R\}.
\end{align*}
Moreover, the mapping $h:[0,1]\to C([0,1])$ given by $h(x)=\{g(t+x), t\in[0,1]\}$ is continuous, hence measurable. (Note that the Borel $\sigma-$field and the cylindrical $\sigma-$field coincide on $C([0,1])$.) As a result, there exists an extended probability space $(\tilde{\Omega}, \tilde{\mathcal F}, \tilde{\mathbb P})$ with a uniform [0,1] random variable $\tilde U$ defined on that, such that $\{X(t), t\in \mathbb R\}=h(\tilde{U})=\{g(t+\tilde U), t\in\mathbb R\}$ almost surely on $(\tilde{\Omega}, \tilde{\mathcal F}, \tilde{\mathbb P})$.

\end{proof}

\section{\textbf{Distributions of intrinsic location functionals}}

In this section, we characterize (properties of) intrinsic location functionals of periodic stationary processes.
For a compact interval $[a,b]$, denote the value of an intrinsic location functional $L$ for the process $\mathbf{X}$ on that interval by $L(\mathbf{X}, [a,b])$. Since $\mathbf X$ is stationary and $L$ is shift compatible, the distribution of $L-a$ depends solely on the length of the interval. Thus, we can focus on the intervals starting from $0$, in which case $L(\mathbf{X}, [0,b])$ is abbreviated as $L(\mathbf{X},b)$. Furthermore, with the 1-periodicity of $\mathbf{X}$, it turns out that the only interesting cases are those with $b\leq 1$. In the following we assume $b\leq 1$ throughout. The case where $b>1$ will be briefly discussed in Remark \ref{remark:longer}, after the introduction of a representation result for intrinsic location functional.

Denote by $F^\mathbf{X}_{L, [a,b]}$ the law of $L(\mathbf{X}, [a,b])$. It is a probability measure supported on $[a,b]\cup \{\infty\}$.

It was shown in \cite{samorodnitsky2013intrinsic} that the distribution of an intrinsic location functional for any stationary process over the real line, not necessarily periodic, possesses a specific group of properties. Adding periodicity obviously will not change these results. Here we present a simplified version of the original theorem for succinctness.

\begin{proposition}\label{prop2}
Let $L$ be an intrinsic location functional and $\{X(t), ~t\in \mathbb R\}$ a stationary process. The restriction of the law $F^\mathbf{X}_{L, T}$ to the interior $(0,T)$ of the interval is absolutely continuous. Moreover, there exists a c\`adl\`ag version of the density function, denoted by $f^\mathbf {X}_{L, T}$, which satisfies the following conditions:\\
(a) The limits
\begin{align}
f^\mathbf {X}_{L, T}(0+)=\lim_{t \downarrow 0}f^\mathbf {X}_{L, T}(t) ~\mbox{and}~ f^\mathbf {X}_{L, T}(T-)=\lim_{t\uparrow T}f^\mathbf {X}_{L, T}(t)
\end{align}
exist.\\
(b) \begin{align*}
\mathrm{TV}_{(t_1,t_2)}(f^\mathbf {X}_{L,T})\leq f^\mathbf {X}_{L, T}(t_1)+f^\mathbf {X}_{L, T}(t_2)
\end{align*}
for all $0<t_1<t_2<T$, where
\begin{align*}
\mathrm{TV}_{(t_1,t_2)}(f^\mathbf {X}_{L, T})=\sup \sum^{n-1}_{i=1} \left |f^\mathbf {X}_{L, T}(s_{i+1})-f^\mathbf {X}_{L, T}(s_i)\right |
\end{align*}
is the total variation of $f^\mathbf {X}_{L, T}$ on the interval $(t_1,t_2)$, and the supremum is taken over all choices of $t_1<s_1<\cdots <s_n<t_2$.\\
\end{proposition}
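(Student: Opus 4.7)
The proof rests on jointly exploiting three structural ingredients: shift compatibility, stability under restrictions, and the stationarity of $\mathbf{X}$. The central observation is a shift-and-restrict bound. For a subinterval $(a, a+\delta) \subset (0, T)$, whenever $L(\mathbf{X}, [0,T]) \in (a, a+\delta)$, stability under restrictions (with $I_1 = [0,T]$ and $I_2 = [a, a+\delta]$) forces $L(\mathbf{X}, [a, a+\delta]) = L(\mathbf{X}, [0,T])$. Shifting by $a$ via shift compatibility and invoking stationarity then yields
\[
\mathbb{P}\bigl(L(\mathbf{X}, [0,T]) \in (a, a+\delta)\bigr) \leq \mathbb{P}\bigl(L(\mathbf{X}, [0, \delta]) \in (0, \delta)\bigr),
\]
whose right-hand side depends on $\delta$ alone and tends to $0$ as $\delta \to 0$. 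This uniform modulus of continuity shows that $F^{\mathbf{X}}_{L,T}$ restricted to $(0, T)$ is absolutely continuous and hence admits a density.

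To construct a c\`adl\`ag version $f^{\mathbf{X}}_{L,T}$, I would define it as the right derivative of the distribution function at each $\tau \in (0, T)$ and verify existence and one-sided continuity by comparing overlapping shifted windows through the same shift-and-restrict device: the mass in $(\tau, \tau + \epsilon)$ differs from the mass in $(\tau', \tau' + \epsilon)$ only through the ``endpoint'' strips, whose size is controlled uniformly. The endpoint limits $f^{\mathbf{X}}_{L,T}(0+)$ and $f^{\mathbf{X}}_{L,T}(T-)$ then follow from a nested-interval argument in which the consistency of existence axiom rules out any pathological accumulation of mass at the boundaries.

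The principal obstacle is the total variation inequality, for which my plan is the following. Given a subdivision $t_1 < s_1 < \cdots < s_n < t_2$ and a small $\epsilon > 0$, approximate $f^{\mathbf{X}}_{L,T}(s_i)$ by $\epsilon^{-1}\mathbb{P}(L(\mathbf{X}, [0,T]) \in (s_i, s_i + \epsilon))$ and interpret each signed increment $f^{\mathbf{X}}_{L,T}(s_{i+1}) - f^{\mathbf{X}}_{L,T}(s_i)$ as the net mass flowing into or out of a window sliding from $s_i$ to $s_{i+1}$. When the absolute values are summed, the interior contributions from adjacent windows cancel by disjointness of the underlying small intervals, and only the fluxes through the boundary windows near $t_1$ and $t_2$ survive; these are bounded by $f^{\mathbf{X}}_{L,T}(t_1)$ and $f^{\mathbf{X}}_{L,T}(t_2)$ respectively. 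The delicate step is justifying this cancellation, which requires repeated, careful use of stability under restrictions to identify when the path's value of $L$ on $[0,T]$ actually coincides with its value on each of the moving sub-windows; this is where a direct density-level argument, without passing through finite probabilistic approximations, is likely to fail.
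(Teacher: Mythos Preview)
The paper does not prove this proposition at all: it is stated with a citation to \cite{samorodnitsky2013intrinsic} and then used as a black box. There is therefore no ``paper's own proof'' to compare your attempt against; the authors simply import the result as background for the periodic case.

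That said, a brief comment on your sketch. The shift-and-restrict inequality
\[
\mathbb{P}\bigl(L(\mathbf{X},[0,T])\in(a,a+\delta)\bigr)\le \mathbb{P}\bigl(L(\mathbf{X},[0,\delta])\in(0,\delta)\bigr)
\]
is correct and is indeed the starting point of the original argument in \cite{samorodnitsky2013intrinsic}. However, the conclusion you draw from it---that the right-hand side depending only on $\delta$ and tending to $0$ already gives absolute continuity---is not sufficient: a bound of the form $\mu((a,a+\delta))\le p(\delta)$ with $p(\delta)\to 0$ controls single intervals but not arbitrary finite unions of small intervals, which is what absolute continuity requires. The actual proof in \cite{samorodnitsky2013intrinsic} refines this by showing, via a two-sided application of stability under restrictions, that the density (where it exists) is bounded on compact subintervals of $(0,T)$, and then bootstraps to the c\`adl\`ag version and the variation constraint. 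Your description of the $(TV)$ argument as a ``sliding window with boundary flux'' is the right intuition, but the cancellation you invoke is not merely disjointness of small intervals; it comes from tracking, for each path, how the maximal element of the partially ordered set $S(g)\cap I$ enters and exits the interval as $I$ slides, which is where both stability under restrictions \emph{and} consistency of existence are used in tandem. Your sketch does not yet isolate that mechanism.
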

Note that we have $\int^T_0 f^\mathbf {X}_{L,T}(s) \mathrm {d}s<1$ if there exists a point mass at $\infty$ or at the boundaries $0$ and $T$.

We call the condition $(b)$ in Proposition \ref{prop2} ``Condition $(TV)$'', or the ``variation constraint'', because it puts a constraint on the total variation of the density function. It is not difficult to show that Condition $(TV)$ is equivalent to the following Condition $(TV^{\prime})$:

There exists a sequence $\{t_n\}$, $t_n\downarrow 0$, such that
\begin{align*}
\mathrm{TV}_{(t_n,T-t_n)}(f)\leq f(t_n)+f(T-t_n),~n\in \mathbb N .
\end{align*}

The above general result about the distribution of the intrinsic location functionals for stationary processes over the real line is still valid for periodic stationary processes, and serves as a basis for further exploration. It is, however, not the focus of this paper. For the rest of the paper we will concentrate on the new properties introduced by the periodicity assumption, which do not hold in the general case.

For any intrinsic location functional $L$ and $T\leq 1$, let $I_{L,T}$ be the set of probability distributions $F_{L,T}^{\mathbf X}$ for periodic stationary processes $\mathbf X$ with period $1$ on $[0,T]$. Our goal is to understand the structure of the set $I_{L,T}$, and the conditions that the distributions in $I_{L,T}$ need to satisfy. To this end, note that since ergodic processes are extreme points of the set of stationary processes, the extreme points of the set $I_{L,T}$ can only be the distributions of $L$ for periodic ergodic processes with period $1$. The next proposition gives a list of properties for these distributions.

\begin{proposition}\label{extreme}
Let $L$ be an intrinsic location functional, $\mathbf X$ be a periodic ergodic process with period 1, and $T\leq 1$. Then $F_{L,T}^{\mathbf X}$ and its c\`{a}dl\`{a}g density function on $(0,T)$, denoted by $f$, satisfy:
\begin{enumerate}
\item $f$ takes values in non-negative integers;
\item $f$ satisfies the condition $(TV)$;
\item If $F_{L,T}^{\mathbf X}[0,T]>0$, and there does not exist $t\in(0,T)$ such that $F_{L,T}^{\mathbf X}[0,t]=1$ or $F_{L,T}^{\mathbf X}[t,T]=1$, then $f(t)\geq 1$ for all $t\in (0,T)$. If furthermore, $F_{L,T}^{\mathbf X}(\{\infty\})>0$, then $f-1$ also satisfies the condition $(TV)$.
\end{enumerate}
\end{proposition}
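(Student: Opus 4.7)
The plan is to build on Proposition \ref{cha}. Since $\mathbf X$ is a continuous periodic ergodic process of period $1$, that proposition gives a deterministic periodic function $g$ and $U\sim\mathrm{Unif}[0,1]$ with $X(t)=g(t+U)$ almost surely. Shift compatibility of $L$ then yields
\[
L(\mathbf X,[0,T])\stackrel{d}{=}\phi(U),\qquad \phi(u):=L(g,[u,T+u])-u,
\]
so $\phi$ is a deterministic $[0,T]\cup\{\infty\}$-valued function on the circle $\mathbb R/\mathbb Z$. The entire proof then reduces to analyzing this single deterministic function.

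I would first extract the skeletal structure of $\phi$ from the axioms. Using stability under restrictions on the shrinking overlap $[u_2,T+u_1]$, one shows that $\psi(u):=L(g,[u,T+u])$ is locally constant between its jumps, so $\phi$ is piecewise linear with slope $-1$ on each piece. The same overlap analysis, combined with consistency of existence at finite/$\infty$ transitions, shows that every jump at $u^*$ is upward and forces $\phi(u^{*}-)=0$ or $\phi(u^{*}+)=T$: at an $\infty$-to-finite transition only the latter can occur, at a finite-to-$\infty$ transition only the former, and at a finite-to-finite jump at least one of them holds. Consequently each finite piece has image $[\phi_{\mathrm{end}},\phi_{\mathrm{start}}]\subseteq[0,T]$. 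Property~1 is then immediate: $f(t)$ equals the number of pieces whose image contains $t$, a non-negative integer. Property~2 is inherited from Proposition \ref{prop2}.

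For the first assertion of Property~3 I would split on whether $F_{L,T}^{\mathbf X}(\{\infty\})=0$. When it is positive, pick any $\infty$-section; the piece immediately following it starts at $\phi=T$, and whenever a piece in the subsequent chain ends at $\phi>0$ the next piece is again forced to start at $T$. Since the circle has finite length, this chain must eventually terminate at a piece ending at $\phi=0$, i.e., a full piece with image $[0,T]$, so $f(t)\geq1$ on $(0,T)$. When $F_{L,T}^{\mathbf X}(\{\infty\})=0$, the other two hypotheses force $\phi$ to take values both strictly above and strictly below every $t\in(0,T)$ on sets of positive Lebesgue measure. Since all jumps are upward, any transition along the circle from $\{\phi>t\}$ to $\{\phi<t\}$ must occur continuously inside a single piece, whose image therefore contains $t$.

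For the additional statement of Property~3, fix $(t_1,t_2)\subset(0,T)$ and classify pieces by the relation of their image to $[t_1,t_2]$: let $N_1$ count pieces whose image contains $[t_1,t_2]$, $N_2$ (resp.\ $N_3$) those containing $t_1$ (resp.\ $t_2$) only, and $N_4$ those with image strictly inside $(t_1,t_2)$. A direct count of the up- and down-jumps gives $\mathrm{TV}_{(t_1,t_2)}(f)=N_2+N_3+2N_4$ and $f(t_1)+f(t_2)=2N_1+N_2+N_3$, so condition $(TV)$ for $f-1$ reduces to $N_1\geq N_4+1$. I would prove this via an injection: each type-$4$ piece ends at $\phi>0$ and is therefore followed by a piece starting at $\phi=T$; tracing the resulting chain (each piece forced to start at $T$ whenever the previous ends at $\phi>0$) must terminate at a type-$1$ piece, and distinct type-$4$ pieces yield disjoint chains since a type-$4$ piece does not start at $T$. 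The extra $+1$ comes from the chain initiated by any $\infty$-section, whose terminating type-$1$ piece is disjoint from every type-$4$ chain (an intervening $\infty$-section would already force any such chain to terminate earlier). The main obstacle is the structural step: establishing the dichotomy $\phi(u^{*}-)=0$ or $\phi(u^{*}+)=T$ at finite-to-finite jumps requires a careful $\delta\downarrow0$ argument on shrinking overlaps, and the injection demands careful bookkeeping to confirm disjointness in the presence of both $\infty$-sections and arbitrary finite jump destinations.
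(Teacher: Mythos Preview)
Your approach is genuinely different from the paper's. The paper works throughout with the partially ordered random set representation $(S(g),\preceq)$: it expresses the density as $f(t)=|\{s\in S(g)\cap[0,1): a_s\ge t,\ b_s\ge T-t\}|$, proves Property~3 in the case $F(\{\infty\})>0$ by exhibiting a single global maximum $\tau$ with $a_\tau,b_\tau\ge T$, handles the $F(\{\infty\})=0$ case via an intricate classification of points into sets $A,B,C,D$, and obtains the $(TV)$ bound for $f-1$ by constructing an auxiliary functional $L'$ (adding points to $S(g)$ inside the $\infty$-gap) whose density is $f+\mathbb I_{(v-2\epsilon,v-\epsilon]}$ and then invoking $(TV)$ for $L'$. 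You instead bypass $(S,\preceq)$ entirely and analyse the single deterministic function $\phi(u)=L(g,[u,T+u])-u$ on the circle. Your crossing argument for Property~3 when $F(\{\infty\})=0$ is considerably cleaner than the paper's $A,B,C,D$ analysis, and your piece-counting reduction of ``$f-1$ satisfies $(TV)$'' to the combinatorial inequality $N_1\ge N_4+1$ is an attractive alternative to the paper's auxiliary-functional trick.

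The one point that is not yet closed is chain termination. The sentence ``since the circle has finite length, this chain must eventually terminate'' does not suffice: the pieces following a type-4 piece (each forced to start at $\phi=T$) can have lengths $T-\phi_{\mathrm{end}}^{(i)}$ summing to less than $1$ while $\phi_{\mathrm{end}}^{(i)}\uparrow T$, so an infinite chain accumulating at some $v_\infty$ is not excluded by length alone, and this does not by itself contradict finiteness of $f$ on $(0,T)$. For the first assertion of Property~3 this is harmless, since the simpler crossing argument you give in the $F(\{\infty\})=0$ case applies verbatim when $F(\{\infty\})>0$ (use that $\phi=T$ just after and $\phi=0$ just before each $\infty$-section). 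For the injection establishing $N_1\ge N_4+1$, however, you do need to rule out non-terminating chains, and this requires an extra argument---e.g.\ showing that at any accumulation point $v_\infty$ one again has $\phi(v_\infty)=T$ and iterating transfinitely until the chain is forced into the (positive-length) $\infty$-section, or replacing the chain by a global up/down-crossing count at levels $t_1$ and $t_2$. This is the only substantive gap; the structural dichotomy $\phi(u^{*-})=0$ or $\phi(u^{*+})=T$ and the resulting identity $\mathrm{TV}_{(t_1,t_2)}(f)\le N_2+N_3+2N_4$, $f(t_1)+f(t_2)=2N_1+N_2+N_3$ are correct.
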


Note that the condition in the first part of property 3 can be translated into requiring either a positive but smaller than 1 mass at $\infty$, or a positive point mass or a positive limit of the density function at each of the two boundaries 0 and $T$.

The proof of Proposition \ref{extreme} relies on the following representation result given in \cite{shen2016random}.

\begin{proposition}\label{prop:re}
A mapping $L(g,I):H\times \mathcal I\to \mathbb R\cup \{\infty\}$ is an intrinsic location functional if and only if
\begin{enumerate}
\item $L(\cdot, I)$ is measurable for $I\in \mathcal I$;
\item There exists a subset of $\mathbb R$ determined by $g$, denoted as $S(g)$, and a partial order $\preceq$ on it, satisfying:
\begin{enumerate}
\item[(1)] For any $c\in \mathbb R$, $S(g)=S(\theta_c g)+c$;
\item[(2)] For any $c\in \mathbb R$ and $t_1,t_2\in S(g)$, $t_1\preceq t_2$ implies $t_1-c\preceq t_2-c$ in $S(\theta_c g)$,
\end{enumerate}
\end{enumerate}
such that for any $I\in \mathcal I$, either $S(g)\cap I=\emptyset$, in which case $L(g,I)=\infty$, or $L(g,I)$ is the unique maximal element in $S(g)\cap I$ according to $\preceq$.
\end{proposition}

Such a pair $(S, \preceq)$ in the above proposition is called a \textit{partially ordered random set representation of $L$}. Intuitively, this representation result shows that a random location is an intrinsic location functional if and only if it always takes the location of the maximal element in a random set of points, according to some partial order. Both the random set and the order are determined by the path and are shift-invariant.

\begin{remark}\label{remark:longer}
By Proposition \ref{prop:re}, for a function $g$ with period 1, $t\in S(g)$ implies $t+c\in S(\theta_{-c} g)=S(g)$ for any $c\in\mathbb Z$. Moreover, if $t+1\preceq t$, then $t+c_2\preceq t+c_1$ for all $c_1,c_2\in\mathbb Z, c_2>c_1$. As a result, for an interval $[a,b]$ with length greater than 1, only the points in the leftmost cycle $[a,a+1)$ can have the maximal order. Thus, the location of the intrinsic location functional on $[a,b]$ will be the same as on $[a,a+1]$. Symmetrically, if $t\preceq t+1$, then the location of the intrinsic location functional on $[a,b]$ will be the same as on $[b-1,b]$. Hence we only need to consider the intervals with length no larger than $1$.
\end{remark}

\begin{proof}[Proof of Proposition \ref{extreme}]
Property 2 directly comes from Proposition \ref{prop2}. We only need to check properties 1 and 3.

\underline{Property 1}. Since $\mathbf{X}$ is a periodic ergodic process with period $1$, by Proposition \ref{cha}, there exists a periodic deterministic function $g$ with period $1$ such that $X(t)=g(t+U)$ for $t\in \mathbb R$, where $U$ follows a uniform distribution on $[0,1]$. In other words, all the sample paths of $\mathbf{X}$ are the same up to translation. Let $(S,\preceq)$ be a partially ordered random set representation of $L$. For any $s\in S(g)$, define
\begin{align*}
a_s&:=\sup \{\Delta s\in \mathbb{R}: r\preceq s~\mbox{for all}~ r\in (s-\Delta s,s)\cap S(g)  \},\\
b_s&:= \sup \{\Delta s\in \mathbb{R}: r\preceq s~\mbox{for all}~ r\in(s,s+\Delta s)\cap S(g) \},
\end{align*}
and define $\sup \emptyset=\infty$ by convention. By a slight abuse of notation, we also use $a_s$ and $b_s$ to denote the same quantity for $s\in S(\mathbf X)$. Intuitively, $a_s$ and $b_s$ are the largest distance by which we can go to the left and right of the point $s$ without passing a point with higher order than $s$ according to $\preceq$, respectively. Thus, for $0<t<t+\Delta t<T$, we have
\begin{align}
\nonumber
&\quad \mathbb P\left( \text{there exists}\,s\in [t,t+\Delta t]\cap S(\mathbf{X}): a_s> t+\Delta t,~b_s> T-t \right) \\
\nonumber
&\leq \mathbb P\left(t\leq L(\mathbf{X},(0,T))\leq t+\Delta t \right)\\
&\leq \mathbb P\left( \text{there exists}\,s\in [t,t+\Delta t]\cap S(\mathbf{X}): a_s\geq t,~b_s\geq T-t-\Delta t  \right).
\end{align}
Seeing that $X(t)=g(t+U)$, $S(\mathbf{X})=S(g)-U$. By change of variable $s\to s-U$,
\begin{align*}
& P\left(\mbox{there exists}\,s\in [t,t+\Delta t]\cap S(\mathbf{X}): a_s> t+\Delta t, ~b_s> T-t  \right)\\
= &  P\left(\mbox{there exists}\,s\in S(g): a_s> t+\Delta t, ~b_s> T-t, ~s-U\in [t,t+\Delta t]\right).
\end{align*}
Note the values of $a_s$ and $b_s$ remain unchanged, since they are defined with respect to $\mathbf X$ on the left hand side, and with respect to $g$ on the right hand side.

Since $S(g)$ has period 1, $s\in S(g)$ if and only if $s-\lfloor s\rfloor\in S(g)\cap[0,1)$. Moreover, since $s-U$ and $s-\lfloor s\rfloor-U-\lfloor s-\lfloor s\rfloor-U\rfloor$ share the same fractional part and are both in $[0,1)$, $s-U=s-\lfloor s\rfloor-U-\lfloor s-\lfloor s\rfloor-U\rfloor$. Thus, by another change of variable $s-\lfloor s\rfloor \to s$, we have
\begin{align*}
&\quad  P\left(\mbox{there exists}\,s\in S(g): a_s> t+\Delta t, ~b_s> T-t, ~s-U\in [t,t+\Delta t]\right)\\
&=P\left(\mbox{there exists}\, s\in S(g)\cap [0,1) \,\right.\\
&~~~\left.\mbox{such that}\, a_s>t+\Delta t, ~b_s>T-t, ~\mbox{and}~s-U-\lfloor s-U \rfloor\in[t,t+\Delta t]\right).
\end{align*}

Therefore, for $\Delta t$ small enough,
\begin{align*}
&\quad \mathbb P \left(\text{there exists}\,s\in [t,t+\Delta t]\cap S(\mathbf{X}): a_s> t+\Delta t,~b_s> T-t  \right)\\
&=\left|\{s\in S(g)\cap [0,1): a_s> t+\Delta t,~b_s> T-t\}\right | \cdot \Delta t,
\end{align*}
where $|A|$  denotes the cardinal of set $A$.
Thus, we have
\begin{align}
f(t)&=\lim_{\Delta t\to 0}\frac{ \mathbb P\left(t\leq L(\mathbf{X},(0,T))\leq t+\Delta t \right)}{\Delta t}\nonumber\\
&\geq \left| \{s\in S(g)\cap [0,1): a_s> t, ~b_s> T-t\}\right|.
\end{align}
Symmetrically,
\begin{align}\label{char3}
f(t)&=\lim_{\Delta t\to 0}\frac{ \mathbb P\left(t\leq L(\mathbf{X},(0,T))\leq t+\Delta t \right)}{\Delta t}\nonumber\\
&\leq |\{s\in S(g)\cap [0,1): a_s\geq t,~ b_s\geq T-t\}|.
\end{align}
Moreover, it is easy to see that the set $\Sigma:=\{s \in S(g)\cap [0,1): a_s>0 ~\mbox{and}~ b_s>0\}$ is at most countable, then $\{t: a_s=t ~\mbox{or}~ b_s=T-t~ \mbox{for some}~ s\in \Sigma\}$ is also at most countable. Hence the density can be taken as the c\`{a}dl\`{a}g modification of $\left|\{s\in S(g)\cap [0,1): a_s\geq t, ~b_s\geq T-t\}\right|$, which only takes values in non-negative integers.

\underline{Property 3}. Assume $F_{L,T}^{\mathbf X}[0,T]>0$ and there does not exist $t\in(0,T)$, such that $F_{L,T}^{\mathbf X}[0,t]=1$ or $F_{L,T}^{\mathbf X}[t,T]=1$. There are two possible cases depending on whether $F_{L,T}^{\mathbf X}$ has a point mass at $\infty$.

First suppose $F_{L,T}^{\mathbf X}(\{\infty\})\in (0,1)$. Then by the partially ordered random set representation, there exists an interval $[s_\infty, t_{\infty}]$ (depending on $g$) satisfying $t_\infty-s_\infty\geq T$, such that $S(g)\cap [s_\infty, t_\infty]=\emptyset$. Since $g$ has period 1, $S(g)\cap [s_\infty+1, t_\infty+1]=\emptyset$ as well. Let $\tau=L(g,[t_\infty, s_\infty+1])$. Since $L$ is not identically $\infty$, such a finite $\tau$ must exist. Moreover note that there is no point of $S(g)$ in $[s_\infty, t_\infty]$ and $[s_\infty+1, t_\infty+1]$, hence $\tau$ is actually the maximal element in $S(g)$ according to $\preceq$ on the interval $[s_\infty, t_\infty+1]$. Thus, $a_\tau>\tau-s_\infty=\tau-t_\infty+t_\infty-s_\infty\geq T$, and symmetrically $b_\tau\geq T$. Consequently, $\tau-\lfloor \tau\rfloor$ is in the set $\{s\in S(g)\cap [0,1): a_s\geq t, ~b_s\geq T-t\}$ for all $t\in (0,T)$. Since the density function $f(t)$ can be taken as the c\`{a}dl\`{a}g modification of $\left|\{s\in S(g)\cap [0,1): a_s\geq t, ~b_s\geq T-t\}\right|$, $f(t)\geq 1$ for all $t\in(0,T)$.

For the second possibility, suppose now there is either a positive mass or a positive limit of the density function on each of the two boundaries 0 and $T$. Suppose for the purpose of contradiction that there exists a non-degenerate interval $[u,T-v]$ such that $f(t)=0$ for all $t\in [u,T-v]$. For $t\in S(g)$, we distinguish four different types:
$A:=\{t\in S(g): a_t\leq u,~b_t> T-u-\epsilon \}$, $B:=\{t\in S(g):a_t>T-v-\epsilon,~b_t\leq v\}$, $C:=\{t\in S(g): a_t> u,~b_t > v,~ a_t+ b_t> T\}$ and $D:=\{t\in S(g): a_t>u, ~b_t>v, ~a_t+b_t=T\}$, where $0<\epsilon<\frac{T-u-v}{2}$. Sets $A$, $B$, $C$ and $D$ are disjoint, and for any $t\in S(g)$ such that $t=L(g,I)$ for some interval $I$ with length $T$, $t\in A\cup B\cup C\cup D$. By the assumption about $f$, it is easy to see that $A\neq \emptyset$, $B\neq \emptyset$ and $C=\emptyset$.

We claim that for any $x\in A$ and $y\in B$, if $x>y$, then $x-y > T$. Suppose it is not true. For interval $I=[t,t+T]$, where $t$ satisfies $0\leq y-t<T-v-\epsilon$ and $0\leq t+T-x<T-u-\epsilon$, let $z$ be the maximal element in $S(g)\cap I$ according to $\preceq$. Note that the choice of $t$ guarantees that $x,y\in I$, hence $S(g)\cap I\neq\emptyset$, $z$ always exists. Moreover, $x\preceq z$ and $y\preceq z$. Because $y\in B$, $y$ is larger in $\preceq$ than any point to its left within a distance smaller than $T-v-\epsilon$, which contains $[t,y]$. Thus, $z$ can not be in this part of the interval $I$. Similarly, $z$ can not be in $[x,t+T]$, hence $z\in [y,x]$. For such $z$,
\begin{align*}
a_z\geq a_y>T-v-\epsilon>u, ~~b_z\geq b_x> T-u-\epsilon>v,
\end{align*}
and $a_z+b_z>T-v-\epsilon+T-u-\epsilon>T$, which means $z\in C$. However, $C=\emptyset$ by assumption. Therefore, for any $x\in A$, $y\in B$ and $x>y$, we have $x-y>T$.

On the other hand, we show in the following paragraphs that for any point $y\in B$, there exists another point $y'\in B$, such that $\frac{u}{2}<y'-y\leq T$. To this end, consider a number of intervals $[y-\epsilon_i,y-\epsilon_i+T]$ given any arbitrary point $y\in B$ and $\epsilon_i=\frac{1}{2i}u$ for $i=1,2,\dots$. Denote $l_i$ as the maximal element in $[y-\epsilon_i,y-\epsilon_i+T] \cap S(g)$ according to $\preceq$. Notice that since $y\in S(g)$, $l_i$ always exists. Seeing that $a_y>T-v-\epsilon>u$, $l_i$ must be in $[y,y+T]$. Since $l_i-y\leq T$, $l_i$ must be in the set $B\cup D$.

Next, we show that there exists $i$ such that $l_i\in B$. Suppose $l_i\in D$ for all $i$. If there exist $l_i=l_j\in D$ for some $i< j$, then $l_i$ is the maximal element in both $[y-\epsilon_i, y-\epsilon_i+T]\cap S(g)$ and $[y-\epsilon_j, y-\epsilon_j+T]\cap S(g)$. As a result, we have $a_{l_i}\geq l_i-y+\epsilon_i$, and $b_{l_i}\geq y-\epsilon_j+T-l_i$. However, this leads to
$$
a_{l_i}+b_{l_i}\geq T+\epsilon_i-\epsilon_j>T,
$$
hence $l_i$ can not be in $D$. Thus, for any $i\neq j$, $l_i\neq l_j$. By the fact that $a_{l_i}>u$ and $b_{l_i}>v$, there are at most $\frac{T}{\min \{u,v\}}$ points in the set $D\cap [y,y+T]$, which contradicts the assumption that $l_i\in D\cap [y,y+T]$ for all $i=1,2,\dots$. As a result, there always exists at least one point $l_i\in B$.

Furthermore, for such $l_i$, if $l_i-y\leq \frac{u}{2}$, then
\begin{align*}
b_{l_i}\geq T-\frac{u}{2}-\epsilon_i\geq T-u>v,
\end{align*}
which contradicts the fact that $l_i\in B$. Therefore for any $y\in B$, there always exists a point $y'=l_i  \in B$, such that
\begin{align*}
\frac{u}{2}<y'-y\leq T.
\end{align*}

As a result, for any periodic function $g$ with period 1, there exists $y_1\in B$ and then a sequence of points $\{y_i,~i=2, \dots,k\}$ in $B$ such that for $i=1,\dots,k-1$,
\begin{align*}
\frac{u}{2}< y_{i+1}-y_i\leq T,
\end{align*}
and $k$ is chosen such that
\begin{align*}
y_{k-1}<1+y_1\leq y_k.
\end{align*}

 However, since $g$ is a periodic function with period $1$ and $A\neq \emptyset$, this means that there must exist some points $x\in A$ and $y\in B$ such that $x-y\leq T$, which contradicts the result we derived before. Therefore, we conclude that there does not exist a non-degenerate interval $[u,T-v]$ such that $f(t)=0$ for all $t\in [u,T-v]$, if the condition in the first part of property 3 holds.

Finally we turn to the second part in property 3. Assume $F_{L,T}^{\mathbf X}(\{\infty\})>0$, then we show that $f-1$ will satisfy the condition (TV). Recall that a positive probability at $\infty$ for $F_{L,T}^{\mathbf X}$ implies the existence of a maximal interval $[s_\infty, t_\infty]$ depending on $g$ satisfying $t_\infty-s_\infty\geq T$ and $S(g)\cap[s_\infty, t_\infty]=\emptyset$. Indeed, the inequality $t_\infty-s_\infty\geq T$ can be strengthened to $t_\infty-s_\infty> T$, since otherwise its contribution to the point mass at $\infty$ will be 0, even though it allows one particular value of $U$ such that $g(t+U)\cap [0,T]=\emptyset$. Consider an interval $[u,v]\subset (0,T)$, such that $f$ is flat on $[u,v]$. Since $f$ takes integer values and satisfies the variation constraint, such an interval always exists. Define
$$
S'(g)=S(g)\cup\{s_\infty+v-\epsilon+C: C\in\mathbb Z\}\cup\bigcup_{C\in \mathbb Z}(s_\infty+T+\epsilon+C, t_\infty+C)
$$
  for $\epsilon$ small enough, and extend the order $\preceq$ to $S'(g)$ (still denoted by $\preceq$) by setting $s_\infty+v-\epsilon+C\preceq t_1\preceq t_2\preceq t$ for any $C\in\mathbb Z$, $t_1,t_2\in (s_\infty+T+\epsilon+C, t_\infty+C)$, $t_1<t_2$, and any $t\in S(g)$. Intuitively, the extended order assigns the minimal order to $s_\infty+v-\epsilon$, then an increasing order to the points in $(s_\infty+T+\epsilon, t_\infty)$, while keeping the order for the added points always inferior to the original points in $S(g)$, and is finally completed by a periodic extension to $\mathbb R$. Let $L'$ be an intrinsic location functional having $(S'(g), \preceq)$ as its partially ordered random set representation, and denote by $f'$ the density of $F_{L',T}^\mathbf X$. It is easy to see that $f'=f+{\mathbb I}_{(v-2\epsilon, v-\epsilon]}$. Hence for $\epsilon$ small enough and $t_n\downarrow 0$ with $t_1$ being small enough, $\text{TV}_{(t_n,T-t_n)}(f')=\text{TV}_{(t_n,T-t_n)}(f)+2$ for any $n$. Since $f'$ satisfies the condition $(TV)$, we must have $\text{TV}_{(t_n,T-t_n)}(f)+2\leq f(t_n)+f(T-t_n)$. Thus $\text{TV}_{(t_n,T-t_n)}(f-1)\leq (f(t_n)-1)+(f(T-t_n)-1)$, which is the variation constraint for $f-1$.
\end{proof}

With the properties of the distributions of $L$ for periodic ergodic processes with period 1 at hand, we proceed to study the structure of $I_{L,T}$, the set of all distributions of $L$ for periodic stationary processes. Denote by $E_{T}$ the collection of probability distributions on $[0,T]\cup\{\infty\}$ satisfying the three properties listed in Proposition \ref{extreme}, and let $\mathcal P_T$ be the collection of all probability distributions on $[0,T]\cup\{\infty\}$ which are absolutely continuous on $(0,T)$. For the rest of the paper, denote by $C(A)$ the convex hull generated by a set $A\subseteq \mathcal P_T$ under the weak topology.

\begin{theorem}\label{convexh}
$I_{L,T}$ is a convex subset of $\mathcal P_T$. Moreover, $I_{L,T}\subseteq C(E_{T})$.
\end{theorem}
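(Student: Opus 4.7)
The plan is to establish the two assertions separately. Convexity is essentially a construction of mixtures of processes, while the containment $I_{L,T}\subseteq C(E_T)$ is a matter of feeding the ergodic decomposition (Theorem \ref{thm5}) through the law of $L$ and invoking Proposition \ref{extreme} on each ergodic piece.

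For convexity, given two periodic stationary processes $\mathbf X_1, \mathbf X_2$ with period $1$ and $p\in[0,1]$, I would build a new process by taking $\mathbf X := B\mathbf X_1 + (1-B)\mathbf X_2$, where $B$ is a Bernoulli$(p)$ random variable independent of $(\mathbf X_1,\mathbf X_2)$. Stationarity, period-$1$ periodicity, and the identity $F_{L,T}^{\mathbf X}=pF_{L,T}^{\mathbf X_1}+(1-p)F_{L,T}^{\mathbf X_2}$ are all immediate from shift compatibility of $L$ and conditioning on $B$. That $I_{L,T}\subseteq\mathcal P_T$ is precisely the content of Proposition \ref{prop2}.

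For the inclusion $I_{L,T}\subseteq C(E_T)$, the first step is to verify that periodicity is preserved by the ergodic decomposition. The set $\tilde\Omega_1:=\{g\in\tilde\Omega: g(t+1)=g(t)\text{ for all }t\in\mathbb R\}$ is measurable and $\theta_t$-invariant for every $t$. Hence whenever $\mu_{\mathbf X}(\tilde\Omega_1)=1$, Theorem \ref{thm5} yields $\rho(\tilde\Omega_1)=1$ for $\lambda$-a.e.\ $\rho$, so every ergodic component is itself a periodic ergodic process with period $1$. Proposition \ref{extreme} therefore gives $F_{L,T}^{\rho}\in E_T$ for $\lambda$-a.e.\ $\rho$. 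Pushing forward the decomposition by the measurable map $g\mapsto L(g,[0,T])$ produces the barycentric representation
\begin{align*}
F_{L,T}^{\mathbf X}=\int_{\rho\in \mathcal M_e}F_{L,T}^{\rho}\,\mathrm d\lambda(\rho).
\end{align*}

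The only real obstacle is to conclude from this that $F_{L,T}^{\mathbf X}$ actually lies in $C(E_T)$ rather than in some a priori larger set of barycenters: the integral above need not be a finite convex combination. I would handle this by a standard approximation, replacing $\lambda$ by simple measures $\lambda_n$ of the form $\sum_{i=1}^{k_n}p_i^{(n)}\delta_{\rho_i^{(n)}}$ with $\rho_i^{(n)}\in\mathcal M_e$ chosen so that $\lambda_n\to\lambda$ weakly. The corresponding finite mixtures $F_n:=\sum_i p_i^{(n)}F_{L,T}^{\rho_i^{(n)}}$ lie in the ordinary convex hull of $E_T$, and since $[0,T]\cup\{\infty\}$ is compact, weak continuity of the barycenter map (applied to bounded continuous test functions on $[0,T]\cup\{\infty\}$) gives $F_n\Rightarrow F_{L,T}^{\mathbf X}$. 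Interpreting $C(E_T)$ as the weakly closed convex hull — the natural reading of the paper's definition — then places $F_{L,T}^{\mathbf X}$ in $C(E_T)$. The measurability of $\rho\mapsto F_{L,T}^{\rho}$ needed to make the integral well defined comes directly from condition (1) in Definition 2.1 and a monotone-class argument, which I expect to be routine.
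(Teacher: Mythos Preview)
Your convexity argument is the same as the paper's. For the inclusion $I_{L,T}\subseteq C(E_T)$, your overall strategy---ergodic decomposition, Proposition~\ref{extreme} on each component, then passage from the integral mixture to the closed convex hull---is sound, but the approximation step as written has a gap.

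You approximate $\lambda$ on $\mathcal M_e$ by discrete $\lambda_n\to\lambda$ weakly and then assert $F_n\Rightarrow F_{L,T}^{\mathbf X}$ via ``weak continuity of the barycenter map''. That continuity would require $\rho\mapsto\int h\,\mathrm dF_{L,T}^{\rho}$ to be continuous on $\mathcal M_e$ for every bounded continuous $h$, which in turn needs $g\mapsto L(g,[0,T])$ to be continuous on path space. Definition~2.1 only gives measurability of $L(\cdot,I)$, so weak convergence of the $\lambda_n$ does not automatically transfer to convergence of the $F_n$. The fix is a one-line change: first push $\lambda$ forward by the measurable map $\rho\mapsto F_{L,T}^{\rho}$ to a probability measure $\tilde\lambda$ on $E_T\subseteq\mathcal P_T$, so that $F_{L,T}^{\mathbf X}=\int_{E_T}G\,\mathrm d\tilde\lambda(G)$. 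On $E_T$ the evaluation $G\mapsto\int h\,\mathrm dG$ is continuous by the definition of the weak topology, and your discrete approximation goes through. Alternatively, once you have the barycentric formula on $E_T$, a Hahn--Banach separation argument places the barycenter directly in $C(E_T)$ without any approximation at all.

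The paper follows a different route: it first proves $E_T$ is weakly compact, defines $I_0$ as the set of all barycenters of probability measures on $E_T$, shows $I_0$ is weakly closed (using Prokhorov on $\mathcal P(E_T)$ to extract convergent subsequences of the representing measures), and then invokes Krein--Milman to identify $I_0$ with $C(E_T)$. Your direct approach, once repaired, is shorter and bypasses both the compactness of $E_T$ and Krein--Milman; the paper's argument instead yields the extra structural information that the extreme points of $I_0$ lie in $E_T$, which aligns with the later discussion of $E_T$ as the set of distributions coming from ergodic processes.
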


\begin{proof}
The convexity of $I_{L,T}$ is obvious. If $F_1, F_2\in I_{L,T}$, then there exist stationary processes with period 1, denoted by $\mathbf X_1, \mathbf X_2$, such that $F_1=F_{L,T}^{\mathbf X_1}$ and $F_2=F_{L,T}^{\mathbf X_2}$. For any $a\in[0,1]$, $aF_1+(1-a)F_2=F_{L,T}^{\mathbf X}$, where the process $\mathbf X$ is a mixture of $\mathbf X_1$ and $\mathbf X_2$, with weights $a$ and $1-a$, respectively.

Next we show $I_{L,T}\subseteq C(E_{T})$. By ergodic decomposition, any $F\in I_{L,T}$ can be written as $F=\int_{G\in E_T}G \mathrm{d} \lambda$, where $\lambda$ is a probability measure on $E_T$. The integration holds in the sense of mixture of probability measures, \text{i.e.},
$$
\int\limits_{x\in[0,T]\cup\{\infty\}}h(x)\mathrm{d} F(x)=\int\limits_{G\in E_T}\int\limits_{x\in[0,T]\cup\{\infty\}}h(x)\mathrm{d} G(x)\mathrm{d} \lambda
$$
for all bounded and continuous function $h$ defined on $[0,T]\cup\{\infty\}$. Since the set of probability measures on $[0,T]\cup\{\infty\}$ equipped with the weak topology is separable, we conclude that $F\in C(E_{T})$.
\end{proof}

The converse of Theorem \ref{convexh}, that for an arbitrarily given intrinsic location functional $L$ and any distribution $F\in C(E_T)$ there exists a periodic stationary process $\mathbf X$ such that $F=F_{L,T}^{\mathbf X}$, is not true in general. For example, it can be easily checked that $L(g,I=[a,b]):=a$ is an intrinsic location functional. Yet the only possible distribution for $L$ on $[0,T]$ is a Dirac measure on the boundary 0. However, the next result shows that the converse does hold if we do not focus on any particular $L$, but collect the possible distributions for all the intrinsic locations functionals. In other words, any member in $C(E_T)$ can be the distribution of some intrinsic location functional on $[0,T]$ and some periodic stationary process with period $1$. More formally, define $I_T=\bigcup_L I_{L,T}$ to be the set of all possible distributions of intrinsic location functionals on $[0,T]$, then $I_T=C(E_T)$. Here and throughout the paper, when we discuss the existence of a stochastic process without specifying the underlying probability space, the existence should be understood as that of the process together with the existence of a probability space on which the process is defined.

\begin{theorem}\label{chage}
For any $F\in C(E_T)$, there exist an intrinsic location functional and a periodic stationary process with period $1$, such that $F$ is the distribution of this intrinsic location for such process on $[0,T]$.
\end{theorem}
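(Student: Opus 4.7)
The plan is to establish $C(E_T)\subseteq I_T$, which together with Theorem \ref{convexh} proves $I_T=C(E_T)$. The argument splits into two stages: first, explicitly realize every extreme distribution $F\in E_T$ by a single ergodic process equipped with a suitable intrinsic location functional; second, lift this to general $F\in C(E_T)$ by combining ergodic components into one stationary process with one ``universal'' intrinsic location functional, exploiting the integral representation of $F$ over $E_T$.

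For the first stage, I would work with the partially ordered random set representation. By Proposition \ref{cha}, any periodic ergodic process with period $1$ has the form $X(t)=g(t+U)$ for some deterministic $g$ and a uniform $U$ on $[0,1]$. The computation in the proof of Proposition \ref{extreme} shows that, with representation $(S,\preceq)$, the c\`adl\`ag density equals
\begin{equation*}
f(t)=\bigl|\{s\in S(g)\cap[0,1):\,a_s\geq t,\,b_s\geq T-t\}\bigr|,
\end{equation*}
so a point $s\in S(g)\cap[0,1)$ contributes the indicator of $[T-b_s,a_s]$ to the density. Given an integer-valued c\`adl\`ag $f$ satisfying $(TV)$ (and the strengthened version involving $f-1$ when there is mass at $\infty$), condition $(TV)$ bounds the number of jumps of $f$, so $f$ admits a finite decomposition $f=\sum_{i=1}^{n}\mathbb{I}_{[u_{i},v_{i}]}$. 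I would place $n$ points on the circle, assign each an appropriate rank in $\preceq$ so as to enforce visibility parameters $a_{s_i}=v_i$ and $b_{s_i}=T-u_i$, and then construct a continuous periodic $g$ whose shape encodes this configuration. Boundary masses at $0$, $T$, and $\infty$ can be absorbed by points placed near the boundary or by appropriately long gaps in $S(g)$, as already used in the analysis of Property 3 of Proposition \ref{extreme}.

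For the second stage, given $F\in C(E_T)$ I would write $F=\int_{E_T}G\,\mathrm{d}\lambda(G)$ for some probability measure $\lambda$ on $E_T$ (using that $E_T$ is weakly compact, as shown in the proof of Theorem \ref{convexh}). For each $G\in E_T$ the first stage yields a triple $(g_G,S_G,\preceq_G)$. Before taking the mixture, I would modify each $g_G$ by embedding a shift-invariant ``signature'' that identifies $G$ from any sample path while preserving $(S_G,\preceq_G)$, for instance a distinctive shape pattern placed outside the critical points of $\preceq_G$. The stationary mixture is obtained by first sampling $G\sim\lambda$ and then independently sampling $U$ uniform on $[0,1]$, setting $X(t)=g_G(t+U)$; this $\mathbf X$ is periodic and stationary with period $1$. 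I would then define a single $L$ by first reading off $G$ from any path $g$ in a shift-invariant manner and then applying $L_G$, and verify that the five defining axioms are preserved. By construction, $F_{L,T}^{\mathbf X}=F$.

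The main obstacle is the second stage. Combining possibly uncountably many ergodic components into a single stationary process with a single $L$ requires measurability of $G\mapsto(g_G,S_G,\preceq_G)$ and path-wise distinguishability of the components, which is not automatic. I would likely handle this by first approximating $\lambda$ by measures supported on countable dense subsets of $E_T$, using weak-continuity of the mixture operation to transfer the result to the limit, and then encoding the countably many signatures as a finite alphabet of shape features that can be embedded into the continuous $g_G$'s without disturbing the partial order. The first stage is essentially combinatorial and, while requiring care at the boundary and at $\{\infty\}$, should not present fundamental difficulties.
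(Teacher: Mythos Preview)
Your two-stage outline matches the paper's strategy, but the execution diverges at the crucial point, and the second stage as you describe it carries unnecessary difficulty that the paper avoids entirely.

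The paper's key move is to fix a \emph{single concrete} intrinsic location functional $L$ at the outset, defined piecewise according to the \emph{range} of the sample path: for non-negative paths, $L$ is the (truncated) supremum location $L_1$; for paths hitting $-1$, $L$ is the first hitting time $L_2$ of level $-1$; otherwise, a last hitting time $L_3$. The case analysis on $F\in E_T$ then reduces to just two structural possibilities for the density $f$: either $f(t)\geq 1$ for all $t\in(0,T)$, or $f$ vanishes somewhere and is therefore monotone on its support. In the first case one constructs a non-negative periodic $g$ so that $L$ acts as $L_1$; in the second, one constructs $g$ hitting $-1$ so that $L$ acts as $L_2$. The ``signature'' is thus nothing more than the range of $g$, which is trivially shift-invariant and measurable. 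Because the same $L$ works for every $G\in E_T$, the mixture step is immediate: sample $G\sim\lambda$, run the corresponding ergodic process, and apply the fixed $L$---no encoding of $G$, no approximation, no measurability worries.

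Your plan to embed a signature identifying $G$ itself, and then to approximate $\lambda$ by countably supported measures, is not wrong in spirit but is a substantial detour. You correctly flag the obstacle, yet the resolution you sketch (weak approximation plus finite-alphabet encoding) would still need to produce, in the limit, a single $L$ and a single process realizing $F$ exactly; weak convergence of the mixing measures does not hand you that without further work. The insight you are missing is that the signature need only distinguish \emph{two} coarse cases, not a continuum of $G$'s.

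One smaller correction in your first stage: the claim that ``condition $(TV)$ bounds the number of jumps of $f$, so $f$ admits a finite decomposition'' is false. The bound $\mathrm{TV}_{(t_1,t_2)}(f)\leq f(t_1)+f(t_2)$ allows $f(t_1)\to\infty$ as $t_1\downarrow 0$ (and similarly at $T$), so $f$ may have infinitely many jumps accumulating at the boundaries. The paper's block decomposition explicitly allows $m=\infty$ and builds $g$ accordingly, with infinitely many V-shaped pieces of shrinking size.
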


The proof of Theorem \ref{chage} consists of three parts. The main steps of the proof are presented in Part I below.
Parts II and III are put in Sections \ref{sec:4} and \ref{sec:5}, respectively, due to the explicit construction required for specific types of intrinsic location functionals.
\begin{proof}[Proof of Theorem \ref{chage}, Part I]
We define an intrinsic location functional $L=L(g,I)$ as
\begin{align*}
L(g,I)=\begin{cases}
L_{1}(g,I) &\mbox{if } g(t)\geq 0 \text{ for all } t\in \mathbb R,\\
L_{2}(g,I) &\mbox{if } \text{there exists } t\in \mathbb R \mbox{ such that } g(t)=-1,\\
L_{3}(g,I) &\mbox{otherwise},
\end{cases}
\end{align*}
where
$$
L_{1}(g,I)=\inf \left\{ t\in I: g(t)=\sup_{s\in I} g(s), g(t)\geq \frac{1}{2} \right\},
$$
$$
L_{2}(g,I)=\inf\{t\in I: g(t)=-1\},
$$
and
$$
L_{3}(g,I)=\sup\{t\in I: g(t)=-2\}.
$$
Intuitively, $L_{1}$ is based on the location of the path supremum, but truncated at level $\frac{1}{2}$. $L_{2}$ and $L_{3}$ are first and last hitting times, respectively.

We first show that such $L$ is an intrinsic location functional, by using the partially ordered random set representation of intrinsic location functionals. It is not difficult to verify that $L_{1}$, $L_{2}$ and $L_{3}$ are all intrinsic location functionals, and hence they all have their own partially ordered random set representations, denoted as $(S_1(g), \preceq_1)$, $(S_2(g), \preceq_2)$ and ($S_3(g), \preceq_3$). For positive sample paths, $L$ has $(S_1, \preceq_1)$ as its partially ordered random representation; otherwise for sample paths reaching level $-1$, $L$ has $(S_2, \preceq_2)$; otherwise, $L$ has $(S_3, \preceq_3)$. Combining the three cases gives a complete partially ordered random set representation for $L$. Thus, $L$ is an intrinsic location functional.

Next, we need to show that for any $F\in E_T$, there exists a periodic ergodic process with period $1$ such that $F$ is the distribution of $L$ over $[0,T]$ for such process. For any $F\in E_T$, let $f$ be its density function on $(0,T)$. We discuss two possible scenarios depending on whether $f(t)\geq 1$ for all $t$ or not.
\begin{enumerate}
\item If $f(t)\geq 1$ for all $t\in (0,T)$, we are going to show that there exists a periodic ergodic process with period $1$ and positive sample paths, such that $F$ is the distribution of $L_{1}$ on $[0,T]$ for that process. Since $L_1$ is a modified version of the location of the path supremum,   this part of the proof is postponed and will be resumed right after the proof of Theorem \ref{inpro}, in which we focus on the distribution of the location of the path supremum.
\item Otherwise, $f(t)=0$ for some $t$. Recall from the definition of $E_T$ that if $f(0+)\geq 1$ and $f(T-)\geq 1$, then $f(t)\geq 1$ for all $t\in(0,T)$. Hence in this case we must have $f(0+)=0$ or $f(T-)=0$. Assume $f(T-)=0$ for example. Take $u:=\inf\{t\in(0,T): f(t)=0\}$ and a sequence $\{t_n\in(u,T)\}_{n\in\mathbb N}$ such that $t_n\uparrow T$ as $n\to\infty$ and $f(t_n)=0$ for all $n$. The variation constraint applied to the intervals $(0,u)$ and $(u, t_n)$ implies that $f$ is non-increasing in $(0,u)$ and that $f(t)=0$ for $f\in[u,T)$, respectively. Symmetric results hold for the case where $f(0+)=0$. To summarize, if $f$ is the density function for a distribution in $E_T$ and $f(t)=0$ for some $t$, we have
\begin{enumerate}
\item[(1)] $f$ takes values in non-negative integers;
\item[(2)] Either there exists $u\in (0,T)$ such that $f$ is a non-increasing function in the interval $(0,u)$ and $f(t)=0$ for $t\in [u,T)$, or there exists $v\in (0,T)$ such that $f$ is a non-decreasing function in the interval $[v,T)$ and $f(t)=0$ for $t\in (0,v)$.
\end{enumerate}
\end{enumerate}
By symmetry, we only prove the case where $f$ is non-increasing in the interval $(0,u)$ and $f(t)=0$ for $t\in[u,T)$. Since the intrinsic location functional that we are going to use in this case, $L_2$, is a first hitting time, this part of the proof is postponed and will be resumed right after the proof of Proposition \ref{thmin}, which deals with this type of intrinsic location functionals.
\end{proof}

\begin{remark}
The proof of Theorem \ref{chage} actually implies a stronger result: all the distributions in $C(E_T)$ can be generated by a single intrinsic location functional, which is the location $L$ defined in the proof of the theorem.
\end{remark}

\begin{remark}
Among the three conditions defining the set $E_T$, the condition $(TV)$ is stable under convex combination, while the other two, integer values and a lower bound at level 1 under some conditions, are not. Therefore when passing from ergodic processes to stationary processes, these two conditions will not persist. However, this does not mean that they will simply disappear. They still affect the structure of the set of all possible distributions $I_T=C(E_T)$, but in a complicated way. While an explicit, analytical description of $I_T$ is not known, we point out in the following example that $I_T$ is indeed a proper subset of the set of all distributions solely satisfying condition $(TV)$.

Denote by $A_T$ the class of probability distributions on $[0,T]\cup\{\infty\}$ with densities satisfying the variation constraint $(TV)$. Let $T=1$ and consider a probability distribution $F$ with density function
\begin{align*}
f(t)=\begin{cases}
\frac{4}{3}, &\quad t\in(0,\frac{3}{4}),\\
0, &\quad t\in[\frac{3}{4},1).
\end{cases}
\end{align*}
From the construction of $f$, it is easy to check that $F\in A_T$. Suppose $F$ is also in the set $I_T$, then it can be written as an integral of the elements in the set $E_T$ with respect to a probability measure on $E_T$, as discussed in the proof of Theorem \ref{convexh}. Since $f(t)=0$ for all $t\in [\frac{3}{4},1)$, the variation constraint implies that any candidate density $g$ to construct $f$ must be non-increasing on the interval $(0,\frac{3}{4})$ and $g(t)=0$ for all $t\in[\frac{3}{4},1)$. Moreover, $g$ takes integer values, so there exists $g$ such that $g(t)=2$ for $t\in (0,\frac{3}{4})$. However, the integral of $g$ is
\begin{align*}
\int^T_{0}g(t)\mathrm{d}t=\frac{3}{2}>1,
\end{align*}
which means that there does not exist a distribution in $E_T$ such that $g$ is its density function. Therefore, $F\notin C(E_T)$, hence $I_T$ is a proper subset of $A_T$.
\end{remark}

\section{\textbf{Invariant intrinsic location functionals}}\label{sec:4}
In this section, we consider a special type of intrinsic location functionals, referred to as the \emph{invariant intrinsic location functionals}.
\begin{definition}
An intrinsic location functional $L$ is called \emph{invariant}, if it satisfies
\begin{enumerate}
\item $L(g, I)\neq\infty$ for any compact interval $I$ and $g\in H$.
\item $L(g, [0,1])=L(g, [a,a+1]) \mod 1$, for any $a\in \mathbb{R}$ and $g\in H$.
\end{enumerate}
\end{definition}

\begin{remark}
Invariance is a natural requirement for an intrinsic location functional on $S_1$. The projection of an interval with length of $1$ in $S_1$ forms a loop, with the starting and ending point being mapped to the same point. The above definition then requires that the location over the whole circle is always well-defined, and does not depend on the location of the starting/ending point.
\end{remark}

\begin{exmp}\label{exmp}
It is easy to see that the location of the path supremum
\begin{align*}
\tau_{g,[a,b]}=\inf \Bigl\{t\in [a,b]: g(t)=\sup_{a\leq s\leq b}g(s)\Bigr\}
\end{align*}
is an invariant intrinsic location functional, provided that the path supremum is uniquely achieved.
\end{exmp}

Besides the location of the path supremum, other invariant intrinsic location functionals include the location of the point with the largest/smallest slope (if the sample paths are in $C^1$), the location of the point with the largest/smallest curvature (if the sample paths are in $C^2$), \textit{etc}, provided the uniqueness of these locations. The related criteria for uniqueness often go back to checking the uniqueness of the path supremum/infimum in one period. Indeed, if the a periodic stationary process has sample paths in $C^1$ (resp. $C^2$), then its first (resp. second) derivative is again a periodic stationary process. For a Gaussian process $\mathbf X$, its derivative $\mathbf X'$ is still Gaussian, and Kim and Pollard \cite{kim1990} showed that the supremum is almost surely achieved at a unique point if $\text{Var}(X'(s), X'(t))\neq 0$ for $s\neq t$. In our periodic case, this means that the process has no period smaller than 1. Another condition was developed by Pimentel \cite{pimentel2014} for general processes with continuous sample paths.

For an invariant intrinsic location functional, we have the following lower bound for its density function.

\begin{proposition}\label{ILP}
For $T\in (0,1]$, any invariant intrinsic location functional $L$ and any periodic stationary process $\mathbf X$ with period $1$, the density $f^\mathbf {X}_{L,T}$ of $L$ on $(0,T)$ satisfies
\begin{align}
f^\mathbf{X}_{L,T}(t) \geq 1~~ \mbox{for all}~t\in (0,T).
\end{align}
\end{proposition}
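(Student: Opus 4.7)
The plan is to reduce everything to a single observation: under invariance, $L(\mathbf{X},[0,1])$ is \emph{exactly} uniformly distributed on $[0,1]$. Once this is established, the lower bound on $f^{\mathbf{X}}_{L,T}$ follows almost immediately from stability under restrictions, since for any outcome falling in $(0,T)$ the two random variables $L(\mathbf{X},[0,T])$ and $L(\mathbf{X},[0,1])$ coincide.

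First I would establish the uniformity claim. Set $Y:=L(\mathbf{X},[0,1])$, which is finite by the first invariance condition. For any $c\in\mathbb{R}$, shift compatibility gives
$$L(\theta_c\mathbf{X},[0,1])=L(\mathbf{X},[c,c+1])-c,$$
while the second invariance condition gives $L(\mathbf{X},[c,c+1])\equiv L(\mathbf{X},[0,1])\ (\mathrm{mod}\ 1)$. Combining, $L(\theta_c\mathbf{X},[0,1])\equiv Y-c\ (\mathrm{mod}\ 1)$ pathwise. By stationarity, $\theta_c\mathbf{X}\stackrel{d}{=}\mathbf{X}$, so the law of $Y\bmod 1$ on the circle $S_1$ is invariant under every rotation, which forces it to be the Haar (uniform) measure. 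In particular, $Y$ has no mass at $\{0,1\}$ and its distribution has Lebesgue density identically equal to $1$ on $(0,1)$.

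Second, I would invoke stability under restrictions with $I_1=[0,1]$ and $I_2=[0,T]$: whenever $L(\mathbf{X},[0,1])\in(0,T)$, we have $L(\mathbf{X},[0,T])=L(\mathbf{X},[0,1])$. Consequently, for every Borel $B\subseteq(0,T)$,
$$\mathbb{P}(L(\mathbf{X},[0,T])\in B)\ \geq\ \mathbb{P}(L(\mathbf{X},[0,1])\in B)\ =\ \mathrm{Leb}(B),$$
so the absolutely continuous part of $F^{\mathbf{X}}_{L,T}$ on $(0,T)$ dominates Lebesgue measure. The c\`adl\`ag version $f^{\mathbf{X}}_{L,T}$ furnished by Proposition~\ref{prop2} must therefore satisfy $f^{\mathbf{X}}_{L,T}(t)\geq 1$ at every $t\in(0,T)$, since right continuity upgrades the almost-everywhere inequality to a pointwise one.

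The main obstacle I anticipate is the uniformity step. One has to juggle shift compatibility, invariance and stationarity simultaneously, and handle the mod-$1$ identification carefully---especially the possibility of boundary values $Y=0$ or $Y=1$. Both concerns are resolved by the observation that the unique rotation-invariant probability measure on $S_1$ is the uniform measure, which assigns zero mass to the identified endpoint. Everything after the uniformity step is a direct consequence of the intrinsic location axioms.
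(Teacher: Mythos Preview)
Your proof is correct and follows essentially the same two-step approach as the paper: first establish that $L(\mathbf{X},[0,1])$ is uniform on $[0,1]$ via invariance plus stationarity, then compare $F^{\mathbf{X}}_{L,T}$ to $F^{\mathbf{X}}_{L,1}$ on $(0,T)$. The only cosmetic difference is that the paper cites an external comparison lemma (Lemma~3.1 of \cite{samorodnitsky2013location}) for the second step, whereas you derive the inequality $\mathbb{P}(L(\mathbf{X},[0,T])\in B)\geq \mathbb{P}(L(\mathbf{X},[0,1])\in B)$ directly from stability under restrictions, which is slightly more self-contained.
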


\begin{proof}
Let $0<a<b<1$. Since $\mathbf{X}$ is stationary, we have
\begin{align}
\mathbb{P}(L(\mathbf{X},[0,1])\in (0,b-a))&=\mathbb{P}(L(\mathbf{X},[a, a+1])\in (a,b)).
\end{align}
By the assumption of invariant intrinsic location functionals, for any $a\in \mathbb{R}$,
\begin{align*}
L(\mathbf{X},[0,1])=L(\mathbf{X},[a,a+1])\mod 1.
\end{align*}
Then
\begin{align*}
\mathbb{P}(L(\mathbf{X},[0,1])\in (0,b-a))&=\mathbb{P}(L(\mathbf{X},[a,a+1])\in (a,b))\\
&=\mathbb{P}(L(\mathbf{X},[0,1])\in (a,b)).
\end{align*}
It means that $L(\mathbf{X},[0,1])$ follows a uniform distribution on the interval $[0,1]$. Thus, for any $t\in (0,1)$,
\begin{align*}
f^\mathbf{X}_{L,[0,1]}(t)=1.
\end{align*}
For any Borel set $B\in \mathcal B([0,T])$, $T\leq 1$, by condition 4 (stability under restrictions) in Definition \ref{def:ilf},
\begin{align*}
F^\mathbf{X}_{L,[0,T]}(B)\geq F^\mathbf{X}_{L,[0,1]}(B).
\end{align*}
Therefore, for any $0<t<T$,
\begin{align*}
f^\mathbf{X}_{L,T}(t)\geq f^\mathbf{X}_{L,1}(t)=1.
\end{align*}
\end{proof}

For a given invariant intrinsic location functional $L$ and $T\leq 1$, let $I^1_{L,T}$ be the collection of probability distributions of $L$ on $[0,T]$ for periodic stationary processes with period $1$. Let $E^1_{T}$ be the collection of probability distributions with no point mass at $\infty$, and (c\`{a}dl\`{a}g) densities $f$ on $(0,T)$ satisfying:
\begin{enumerate}
\item  $f$ takes values in positive integers for all $t\in(0,T)$;
\item  $f$ satisfies the condition $(TV)$.
\end{enumerate}
Then we have the following result regarding the structure of the set $I^1_{L,T}$, parallel to the result for general intrinsic location functionals, Theorem \ref{convexh}.

\begin{corollary}\label{inthm}
$I^1_{L,T}$ is a convex subset of $\mathcal P_T$. Moreover, $I^1_{L,T}\subseteq C(E^1_{T})$.
\end{corollary}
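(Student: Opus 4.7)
The plan is to parallel the proof of Theorem \ref{convexh} almost verbatim, substituting $E^1_T$ for $E_T$ and using the extra structure coming from invariance. Convexity of $I^1_{L,T}$ is immediate: given $F_1,F_2\in I^1_{L,T}$ realized by periodic stationary processes $\mathbf X_1,\mathbf X_2$ of period $1$, the mixture process realizes $aF_1+(1-a)F_2$, and it is again periodic stationary of period $1$, so the convex combination still lies in $I^1_{L,T}$.

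For the inclusion $I^1_{L,T}\subseteq C(E^1_T)$, the key step is to verify that the distribution of $L(\mathbf X,T)$ for any periodic ergodic $\mathbf X$ of period $1$ lies in $E^1_T$. There are three items to check. First, since $L$ is invariant, $L(g,I)\neq\infty$ for every sample path, so there is no point mass at $\infty$. Second, by Proposition \ref{extreme}(1) the c\`adl\`ag density $f$ takes values in the nonnegative integers; combining this with Proposition \ref{ILP} (which applies in particular to ergodic processes, as they are themselves stationary) gives $f(t)\geq 1$, hence $f$ takes values in the \emph{positive} integers on $(0,T)$. Third, the (TV) constraint is exactly Proposition \ref{extreme}(2). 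Thus the distributions of $L$ arising from periodic ergodic processes of period $1$ all sit inside $E^1_T$.

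Next, I would show that $E^1_T$ is weakly compact by the same reasoning used for $E_T$ in Theorem \ref{convexh}: the ambient set of probability measures on $[0,T]\cup\{\infty\}$ is weakly compact by Prokhorov's theorem, and it is straightforward to verify that $E^1_T$ is weakly closed (the constraints of integer values, lower bound $1$, and total-variation control pass to weak limits, once one notes that the integer constraint together with $\int f\leq 1$ forces the densities to be uniformly bounded on sets bounded away from $0$ and $T$, so that the analysis used for $E_T$ carries over). With weak compactness of $E^1_T$ in hand, define $I_0^1$ to be the set of probability measures on $[0,T]\cup\{\infty\}$ representable as $\int_{G\in E^1_T} G\,d\lambda$ for some probability measure $\lambda$ on $E^1_T$; tightness of $\{\lambda_i\}$ via Prokhorov shows $I_0^1$ is weakly closed, and then Krein--Milman identifies $I_0^1$ with the closed convex hull of its extreme points, which lie in $E^1_T$. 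Finally, by Theorem \ref{thm5} (ergodic decomposition) every element of $I^1_{L,T}$ lies in $I_0^1$, yielding $I^1_{L,T}\subseteq I_0^1 = C(E^1_T)$.

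The only mildly nontrivial step is the verification that $E^1_T$ is weakly closed; everything else is a direct transcription of the proof of Theorem \ref{convexh} combined with Propositions \ref{extreme} and \ref{ILP}. The uniform lower bound $f\geq 1$ from Proposition \ref{ILP} is what makes the density strictly positive, which is what upgrades $E_T$ to $E^1_T$ in the invariant setting.
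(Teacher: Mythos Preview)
Your proposal is correct and follows essentially the same approach as the paper: the paper's proof is a one-liner that defers to the proof of Theorem \ref{convexh} and observes that the only additional ingredient needed is $f(t)\geq 1$ for ergodic processes, which is Proposition \ref{ILP}. You have spelled out more details (the absence of mass at $\infty$ from the invariance condition, and the weak closedness of $E^1_T$), but the underlying argument is identical.
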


\begin{proof}
By Proposition \ref{ILP}, the density $f$ for any periodic ergodic process $\mathbf{X}$ with period $1$ satisfies $f(t)\geq 1$ for all $t\in(0,T)$. The rest of the proof follows in the same way as that of Theorem \ref{convexh}.
\end{proof}

Before proceeding to the next result, Theorem \ref{inpro}, which gives the other direction of the relation between $C(E_T^1)$ and the set of all possible distributions, we note that the definition of the location of the path supremum can be extended to the processes with c\`{a}dl\`{a}g sample paths. This extension will be helpful in the proof of Theorem \ref{inpro}.

\begin{remark}
For any periodic stationary process $\mathbf{X}$ with period $1$ and c\`{a}dl\`{a}g sample paths, let $X^\prime(t)= \lim \sup_{s\to t}X(s)$, $t\in \mathbb R$. Then $\mathbf{X}^\prime=\{X^\prime(t),~t\in \mathbb R\}$ has upper semi-continuous sample paths and its supremum over the interval can be attained. As a result, for any $\mathbf{X}$ with c\`{a}dl\`{a}g sample paths, the location of the path supremum for $\mathbf{X}$ can be defined as
\begin{align*}
\tau_{\mathbf{X}, T}:=\inf \left \{ t\in [0,T]: X^\prime(t)=\sup_{s\in [0,T]} X^\prime(s)\right \}.
\end{align*}
\end{remark}

Denote by $\mathcal L_I$ the set of invariant intrinsic location functionals. Let $I_T^1=\bigcup_{L\in\mathcal L_I}I^1_{L,T}$ be the collection of all the possible distributions for invariant intrinsic location functionals and periodic stationary processes with period 1 on $[0,T]$. The next result, in combination with Corollary \ref{inthm}, shows that $I^1_T=C(E^1_T)$.

\begin{theorem}\label{inpro}
For any $F\in C(E^1_T)$, there exists an invariant intrinsic location functional and a periodic stationary process with period $1$, such that $F$ is the distribution of this invariant intrinsic location functional for such process.
\end{theorem}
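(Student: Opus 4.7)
The plan is to establish the converse inclusion $C(E_T^1) \subseteq I_T^1$, which together with Corollary~\ref{inthm} yields $I_T^1 = C(E_T^1)$. Following the abstract, the invariant intrinsic location functional of choice is the location of the path supremum (cf.\ Example~\ref{exmp}, extended to c\`adl\`ag paths as in the remark preceding the theorem); the goal is to realise every $F \in C(E_T^1)$ as its distribution for some periodic stationary process with period~$1$. By the Krein--Milman / ergodic decomposition argument already used in Theorem~\ref{convexh}, any $F \in C(E_T^1)$ admits an integral representation $F = \int G \, d\lambda(G)$ over the extremal set $E_T^1$. Hence it suffices to realise each extremal $F \in E_T^1$ by a periodic ergodic process, and then mix those processes under $\lambda$ to obtain a periodic stationary process whose sup-location distribution is exactly $\int G \, d\lambda = F$.

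For $F \in E_T^1$ with c\`adl\`ag density $f$ on $(0,T)$, by Proposition~\ref{cha} I seek a $1$-periodic deterministic function $g$ such that $X(t) = g(t+U)$ with $U \sim U[0,1]$ has sup-location density $f$. The identity
\begin{align*}
f(t) = |\{s \in S(g) \cap [0,1) : a_s \geq t,\ b_s \geq T-t\}|,
\end{align*}
derived in the proof of Proposition~\ref{extreme}, directs the construction: each local maximum $s$ of $g$, with left/right dominance $a_s, b_s$, contributes the indicator $\mathbf{1}_{[\max(0,T-b_s),\min(T,a_s)]}$ to $f$. Since $f$ is non-negative, integer valued, and satisfies $(TV)$, its $\pm 1$ jumps can be paired in nested parenthesis fashion, yielding a decomposition $f = \sum_{i=0}^{N}\mathbf{1}_{[u_i,v_i]}$ with $[u_0,v_0] = [0,T]$ (using $f \geq 1$) and the remaining intervals nested inside it. I would install one peak $s_i$ per interval, with heights strictly decreasing along the nesting tree, so that $s_0$ has unbounded dominance and each $s_i$ for $i \geq 1$ has the prescribed $a_{s_i} = v_i$ and $b_{s_i} = T-u_i$.

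The main technical obstacle is realising these prescribed dominances on a circle of circumference~$1$. Each non-global peak automatically satisfies $a + b \leq 1$, which matches the per-peak length bound $v_i - u_i \leq 1 - T$, and the total budget $\sum_{i \geq 1}(v_i - u_i) \leq \int_0^T (f-1)\, dt \leq 1 - T$ (from $\int_0^T f \leq 1$) is compatible with the circular constraint. The placement can be carried out recursively along the nesting tree by starting with $s_0$ at $0 \in S_1$ and, within the dominance arc of each parent peak, allocating to each child interval a sub-arc that positions the child at the correct offset from the parent. When siblings would otherwise compete for dominance on the same side, auxiliary ``wall'' peaks with dominance sum at most $T$ (and therefore not contributing to $f$) must be inserted between them; ensuring these auxiliary peaks themselves do not require further walls---possibly via an iterative or limiting construction---is the delicate step. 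A piecewise-linear continuous $g$ carrying the above peaks at the chosen positions, with strictly decreasing heights and sufficiently deep valleys, then realises a periodic ergodic process whose sup-location density is exactly $f$ by the displayed formula. Combined with the mixing reduction from the first step, this completes the proof.
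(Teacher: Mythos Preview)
Your reduction to the extremal case via ergodic decomposition and the choice of the sup-location functional match the paper exactly, as does the general idea of decomposing $f$ into indicator functions and building a deterministic $g$ with one local maximum per indicator. However, the construction you sketch has a genuine gap: the ``wall'' mechanism you propose to separate sibling peaks is precisely the part you flag as delicate, and you do not resolve it. A wall peak you insert must itself have $a_s+b_s<T$ so as not to contribute to $f$, yet it must also dominate the correct neighbour on the correct side; arranging this without triggering a cascade of further walls is not obviously possible, and no limiting argument is supplied.

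The paper avoids this difficulty entirely through a structural observation you are missing. In the maximal decomposition $f=\sum_i \mathbf 1_{(u_i,v_i]}$, the intervals fall into four types: \emph{base} blocks $(0,T]$, \emph{left} blocks $(0,v_i]$, \emph{right} blocks $(u_i,T]$, and \emph{central} blocks $(u_i,v_i]$ with $0<u_i<v_i<T$. The conditions defining $E_T^1$ (integer values, $f\ge 1$, and $(TV)$) are equivalent to: there is at least one base block, and the number of central blocks does not exceed the number of base blocks. This inequality is the crux. It allows one to partition all blocks into components, each containing exactly one base block and \emph{at most one} central block (left and right blocks distributed arbitrarily). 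Within a single component the construction is then straightforward: the base block gives the global maximum, left blocks are stacked as a descending staircase on one side, right blocks on the other, and the single central block (if present) sits at the bottom, sandwiched between the two staircases---no auxiliary walls are ever needed. Your nesting-tree picture does not isolate this ``central $\le$ base'' count, which is why you are forced into the unresolved wall construction.
\medskip
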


\begin{proof}
It suffices to show that for any distribution $F\in E^1_T$, there exists a periodic ergodic process $\mathbf{Y}$ with period $1$ such that $F$ is the distribution of the unique location of the path supremum for $\mathbf{Y}$ on $[0,T]$. By Proposition \ref{extreme}, the density function of $F$, denoted by $f$, takes non-negative integer values and satisfies the condition (TV). As a result, $f$ must be a piecewise constant function and has a unique decomposition
\begin{align}
f(t)=\sum^{m}_{i=1}\mathbb{I}_{(u_i,v_i]}(t),
\end{align}
where $m$ can be infinity and the intervals are maximal, in the sense that for any $i,j=1,\dots,m$, $(u_i,v_i]$ and $(u_j,v_j]$ have only three possible relations:
\begin{align*}
(u_i,v_i]\subset (u_j,v_j], \quad \text{or}\quad (u_j,v_j]\subset (u_i,v_i],\quad \text{or}\quad [u_i,v_i]\cap [u_j,v_j]=\emptyset.
\end{align*}
According to whether $u_i=0$ or $v_i=T$, we call the intervals of the form $(0,T]$, $(0,v_i]$, $(u_i,T]$ and $(u_i,v_i]$ the base, left, right and central block(s), respectively. Observe that property $1$ and $2$ in the defintion of $E_T^1$ are equivalent to requiring that there is at least one base block, and the number of the central blocks does not exceed the number of the base blocks.

We construct the stationary process in spirit of Proposition \ref{cha}. That is, first construct a periodic deterministic function $g$, and then uniformly shift its starting point to get $Y(t)=g(t+U)$, where $U$ is a uniform random variable on $[0,1]$. Let $m_1$ be the number of the base blocks in the collection. We group the entire collection of blocks into $m_1$ components by assigning to each base block at most one central block, and assigning the left and the right blocks in an arbitrary way. Assume $a=F(0)>0$ and $b=1-F(T)>0$. Let
\begin{align*}
d_1=\frac{1}{m_1}a~\text{and}~d_2=\frac{1}{m_1}b.
\end{align*}
For $j=1,\dots,m_1$, let
\begin{align*}
L_j&=d_1+\mbox{the total length of the blocks in the}~ j\mbox{th component}+d_2,
\end{align*}
then $\sum^{m_1}_{i=1}L_i=1$. Set $g(0) = 2$  and $g(L_1)=2$. Using the blocks of the first component, we will define the function $g$ on the interval $(0,L_1]$. If the first component has $l$ left blocks, $r$ right blocks and a central block, where $l$ and $r$ can potentially be infinity, we denote them by $(0, v_j]$, $j = 1,\dots, l$, $(u_k, T]$, $k = 1,\dots,r$ and $(u, v]$ respectively. The case where a central block does not exist corresponds to letting $u=v$. Set
\begin{align}
g\left(\sum^{j-1}_{i=1}v_i+\sum^{j}_{i=1}\frac{1}{2^{i+1}}d_1\right)=g\left(\sum^{j}_{i=1}v_i+\sum^{j}_{i=1}\frac{1}{2^{i+1}}d_1\right)=1+2^{-j},~j=1,\dots,l,
\end{align}
\begin{align}
\nonumber
g\left(d_1+\sum^{l}_{i=1}v_i\right)=g\left(d_1+\sum^{l}_{i=1}v_i+v\right)=g\left(d_1+\sum^{l}_{i=1}v_i+v+T-u\right)=\frac{1}{2},
\end{align}
and
\begin{align}
\nonumber
g\left(L_1-\sum^{j}_{i=1}\frac{1}{2^{i+1}}d_2-\sum^{j-1}_{i=1}(T-u_i)\right)&=g\left(L_1-\sum^{j}_{i=1}\frac{1}{2^{i+1}}d_2-\sum^{j}_{i=1}(T-u_i)\right)\\
&= 1+2^{-j},~ j = 1,\dots, r.
\end{align}
Next, if the values of $g$ at two adjacent points constructed above, $t_1<t_2$, are equal, we join them by a V-shaped curve satisfying some Lipschitz condition. We complete the function $g$ by filling in the other gaps with straight lines between adjacent points (with different values). With the similar construction, we can also define $g$ on the interval $[L_i,L_{i+1}]$, for $i=1,\dots,m_1-1$. Then $g$ is well defined on the interval $[0,1]$ and we extend $g$ as a periodic function with period $1$. If $a$ or $b$ equals to $0$, we take (the c\`{a}dl\`{a}g verion of) the limit of the corresponding construction with $a\downarrow 0$ or $b \downarrow 0$. We have a periodic ergodic process $Y$ as $Y(t) = g(t +U)$ for $t\in \mathbb R$, where $U$ is uniformly distributed on $[0,1]$. It is straightforward, though lengthy, by tracking the value of $L(g(t+U),[0,T])$ as a function of $U$, to see that the distribution of the location of the path supremum for $\mathbf{Y}$ is $F$. The proof is finally complete with an application of ergodic decomposition.
\end{proof}

\begin{remark}
Since the only random location used in the proof of Theorem \ref{inpro} is the location of the path supremum, we actually showed that the set of all possible distributions for invariant intrinsic location functionals is contained in the set of possible distributions solely for the location of path supremum. In this sense, the location of path supremum is a representative of the invariant intrinsic location functionals. This fact is related to the partially ordered random set representation of the intrinsic location functionals.
\end{remark}

\begin{remark}
In the part of introduction we mentioned the question as whether every relatively stationary process defined on an interval $[0,T]$ can always be extended to a periodic stationary process with a given period $T'>T$. Proposition \ref{ILP}, together with Theorem \ref{inpro}, gives a negative answer to this question. To see this, let $T'=1$, and consider the location of the path supremum denoted as $\tau$. Let $T''>1$. As a result of Theorem \ref{inpro}, a simple scaling shows that for a probability distribution $F$ on $[0,T]$ with its density function $f$ on $(0,T)$, as long as $f$ only takes values in positive multiples of $\frac{1}{T''}$ and satisfies the variation constraint $(TV)$, there exists a periodic ergodic process $\mathbf X$ with period $T''$, such that $F$ is the distribution of $\tau$ over the interval $[0,T]$ for $\mathbf X$. In particular, the value of $f(t)$ can be as small as $\frac{1}{T''}$ for some $t\in(0,T)$. Consider $\mathbf X|_{[0,T]}$, the restriction of $\mathbf X$ on $[0,T]$. It is a relatively stationary process. Suppose it can be extended to a periodic stationary process with period 1, denoted by $\mathbf Y$. Then by Proposition \ref{ILP}, the density of $\tau$ on $(0,T)$ for $Y$ is bounded from below by 1. Since $\mathbf Y$ agrees with $\mathbf X|_{[0,T]}$ on $[0,T]$, the lower bound 1 is also valid for $\mathbf X|_{[0,T]}$, hence $\mathbf X$ as well. This contradicts the fact that $f(t)$ can take value $\frac{1}{T''}$. We therefore conclude that the relatively stationary process $\mathbf X|_{[0,T]}$ does not have a stationary extension with period 1.
\end{remark}

We now turn back to the second part of the proof of Theorem \ref{chage} which we promised in the previous section.
\begin{proof}[Proof of Theorem \ref{chage}, Part II]
Recall that an intrinsic location functional $L_1$ is defined as follows:
$$
L_{1}(g,I)=\inf \left \{ t\in I: g(t)=\sup_{s\in I} g(s), ~g(t)\geq \frac{1}{2}\right \},
$$
and our goal in this part is to show that for any probability distribution $F\in E_T$ such that $f(t)\geq 1$ for all $t\in(0,T)$, there exists a periodic ergodic process with period 1 and non-negative sample paths, such that $F$ is the distribution of $L_1$ on $[0,T]$ for that process.

Comparing the conditions for the distribution $F$ and those for the distributions that we constructed in Theorem \ref{inpro}, the only difference is that $F$ allows a possible point mass at $\infty$ while the distributions in Theorem \ref{inpro} do not, because the location of the path supremum will always exist for processes with upper semi-continuous paths. This is the reason for which a modification is necessary. The way to construct the process changes accordingly, but not much. More precisely, let $F$ be our target distribution, with possible point masses $a$ and $b$ at the two boundaries 0 and $T$, respectively. Additionally, it has a possible point mass $c$ at $\infty$. Since the case where $c=0$ has been covered in the proof of Theorem \ref{inpro}, here we focus on $c>0$. Note that since $f-1$ also satisfies the variation constraint in this case, there exists at least one component which does not have a central block. Set this component as the first component. The construction of the process $X(t)=g(t+U)$, hence the function $g$, goes exactly in the same way as in the proof of Theorem \ref{inpro}, except for that now for this first component, instead of building the central block by setting
$$
g\left(d_1+\sum^{l}_{i=1}v_i\right)=g\left(d_1+\sum^{l}_{i=1}v_i+v\right)=g\left(d_1+\sum^{l}_{i=1}v_i+v+T-u\right)=\frac{1}{2},
$$
we set
$$
g\left(d_1+\sum_{i=1}^lv_i\right)=g\left(d_1+\sum_{i=1}^lv_i+T+c\right)=\frac{1}{2},
$$
and join them using a V-shaped curve as in the other cases. The construction of the rest of this component are shifted correspondingly. It is not difficult to verify that this part will contribute the desired mass at $\infty$.
\end{proof}

The variation constraint (TV) implies an upper bound for the density for intrinsic location functionals and stationary processes:
\begin{equation}\label{bounds}
f^\mathbf {X}_{L, T}(t)\leq \max \left(\frac{1}{t}, \frac{1}{T-t}\right), \quad 0<t<T.
\end{equation}
Moreover, such an upper bound was proved to be optimal \citep{samorodnitsky2013location}. With periodicity and the invariance property, we can now improve the above bound, and show that the improved upper bound is also optimal.

\begin{proposition}
Let $L$ be an invariant intrinsic location functional, $\mathbf{X}$ be a periodic stationary process with period $1$, and $T\in (0,1]$. Then the density $f^{\mathbf{X}}_{L,T}$ satisfies
\begin{align}\label{upb2}
f^{\mathbf{X}}_{L,T}(t) \leq \max \left(\lfloor \frac{1-T}{t}\rfloor, \lfloor \frac{1-T}{T-t}\rfloor \right)+2.
\end{align}
Moreover, for any $t\in (0,\frac{T}{2})$ such that $\frac{1-T}{t}$ is not an integer and $t\in[\frac{T}{2},T)$ such that $\frac{1-T}{T-t}$ is not an integer, there exists an invariant intrinsic location functional $L$ and a periodic stationary process $\mathbf X$ with period 1, such that the equality in (\ref{upb2}) is achieved at $t$.
\end{proposition}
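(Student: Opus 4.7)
The plan is to prove the upper bound \eqref{upb2} first and then exhibit an explicit example achieving equality. For the upper bound, by the ergodic decomposition (Theorem \ref{thm5}) it suffices to treat ergodic $\mathbf X$; by Proposition \ref{cha} we may write $\mathbf X(s)=g(s+U)$ with $U$ uniformly distributed on $[0,1]$, and the proof of Proposition \ref{extreme} furnishes a c\`adl\`ag version of the density
\[
f^{\mathbf X}_{L,T}(t) \,=\, \bigl|\{s\in S(g)\cap [0,1):\ a_s\geq t,\ b_s\geq T-t\}\bigr|,
\]
where $(S,\preceq)$ is a partially ordered random set representation of $L$ and $a_s,b_s$ are the maximal counterclockwise/clockwise distances within which $s$ is $\preceq$-maximal in $S(g)$.

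Let $m$ denote this cardinality. I would choose $s_*$ to be a $\preceq$-minimal element among the $m$ counted points and place it at the origin of the circle $S_1$. Any other counted point $r$ satisfies $r\not\preceq s_*$ (by minimality and $r\neq s_*$), and the conditions $a_{s_*}\geq t$, $b_{s_*}\geq T-t$ then force $r\notin(s_*-t,\,s_*+T-t)$; hence the remaining $m-1$ points lie in the complementary arc of length $1-T$. Label them by position within the arc as $r_1<r_2<\cdots<r_{m-1}$. For each consecutive pair, by antisymmetry at least one of $r_i\not\preceq r_{i+1}$ or $r_{i+1}\not\preceq r_i$ holds; the first case combined with $a_{r_{i+1}}\geq t$ yields $r_{i+1}-r_i\geq t$, the second combined with $b_{r_i}\geq T-t$ yields $r_{i+1}-r_i\geq T-t$. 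Thus every gap is at least $\min(t,T-t)$, and summing gives $(m-2)\min(t,T-t)\leq r_{m-1}-r_1\leq 1-T$. Since $\lfloor(1-T)/\min(t,T-t)\rfloor=\max(\lfloor(1-T)/t\rfloor,\lfloor(1-T)/(T-t)\rfloor)$, this is exactly \eqref{upb2}, and the bound extends to all periodic stationary $\mathbf X$ by integrating ergodic densities.

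For optimality, by reversing the orientation of the circle it suffices to handle $t\in(0,T/2)$ with $(1-T)/t$ not an integer; then $\lfloor(1-T)/t\rfloor t<1-T$ strictly. Set $k=\lfloor(1-T)/t\rfloor+2$ and take $L=\tau$, the location of the path supremum (invariant by Example \ref{exmp}). I would construct a $1$-periodic continuous function $g$ whose strict local maxima (``peaks'') sit at $q_0=0$ and $q_j=T-t+(j-1)t+\delta_j$ for $j=1,\dots,k-1$, where $0<\delta_1<\delta_2<\cdots<\delta_{k-1}$ are small enough that $q_{k-1}<1-t$ (which is possible precisely by the strict inequality above). Assign heights $h(q_1)>h(q_2)>\cdots>h(q_{k-1})>h(q_0)$, with $g$ strictly below all peak values away from the peaks, and set $\mathbf X(s)=g(s+U)$ with $U$ uniform on $[0,1]$.

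The verification is a direct case check: for each $j\in\{0,1,\dots,k-1\}$, when $U=q_j-t\pmod 1$ the shifted interval $[U,U+T]$ on the circle contains $q_j$ but no higher peak, so the path supremum is attained at $q_j$ and the location of the supremum equals $t$. The strict increase of the $\delta_j$ pushes each higher $q_i$ ($i<j$) strictly outside the closed interval, and the strict inequality $\lfloor(1-T)/t\rfloor t<1-T$ prevents any higher peak from re-entering through the wrap-around. Since the map $U\mapsto\tau(\mathbf X,[0,T])$ is piecewise linear with slope $-1$, the $k$ branches corresponding to $q_0,\dots,q_{k-1}$ each contribute $1$ to the density at $t$, giving $f^{\mathbf X}_{\tau,T}(t)=k$ and matching \eqref{upb2}. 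The main obstacle throughout is the boundary bookkeeping in this construction: without the strictly increasing $\delta_j$, a higher peak would land exactly on the closed left endpoint of the shifted interval and steal the supremum, so the density count would collapse below $k$; the non-integrality of $(1-T)/t$ is precisely what gives the slack that the $\delta_j$'s exploit.
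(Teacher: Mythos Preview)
Your optimality construction is fine and is a pleasant alternative to the paper's route (which writes down the target density, checks it lies in $E^1_T$, and invokes Theorem~\ref{inpro}); your explicit peak placement does the same job directly.

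The upper-bound argument, however, has a real gap. The step ``the remaining $m-1$ points lie in the complementary arc of length $1-T$'' silently identifies the interval $(s_*-t,\,s_*+T-t)\subset\mathbb{R}$ with an arc on $S_1$. The conditions $a_{s_*}\geq t$ and $b_{s_*}\geq T-t$ only tell you that any $r'\in S(g)$ lying in that \emph{real} interval satisfies $r'\preceq s_*$. When the arc wraps around the fundamental domain, a counted point $r\in[0,1)$ can land in the arc via its translate $r\pm 1$, and then you learn $r\pm 1\preceq s_*$, not $r\preceq s_*$; your $\preceq$-minimality of $s_*$ (which is a statement about the specific representatives in $[0,1)$) gives no contradiction. ``Placing $s_*$ at the origin'' does not help: after the shift the representatives change, and $\preceq$-minimality need not survive, so the argument is circular.

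This is not cosmetic: your argument never uses invariance, and without it the conclusion is false. Take the first-time functional $L(g,I)=\inf\{t\in I:g(t)=-1\}$ with $S(g)=\{k/10:k\in\mathbb Z\}$, $T=1/2$, $t=1/10$. Every point has $a_s=1/10$ and $b_s=\infty$, so the counting set has $m=10$ elements, while your bound would give $\lfloor (1-T)/t\rfloor+2=7$. The paper's proof avoids this by using invariance through Proposition~\ref{ILP}: it works analytically with $g=f-1\geq 0$, the variation constraint, and the integral inequality $\int_0^T g\leq 1-T$, and only reduces to the ergodic (integer-valued) case at the very end to insert the floor. If you want to salvage the combinatorial approach, you must use invariance to select a distinguished fundamental domain---for instance anchored at the global $\preceq$-maximum $\tau^*=L(g,[0,1])$, which by invariance satisfies $a_{\tau^*},b_{\tau^*}\geq 1$---and argue carefully from there; as written, that ingredient is missing.
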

\begin{proof}
Let $g^{\mathbf{X}}_{L,T}(t)=f^{\mathbf{X}}_{L,T}(t)-1$, then for every $0<t_1<t_2<T$, the variation constraint will be
\begin{align*}
\mathrm{TV}_{(t_1,t_2)}(g^{\mathbf{X}}_{L,T})&=\mathrm{TV}_{(t_1, t_2)}(f^{\mathbf{X}}_{L,T})
\leq f^{\mathbf{X}}_{L,T}(t_1)+f^{\mathbf{X}}_{L,T}(t_2)=g^{\mathbf{X}}_{L,T}(t_1)+g^{\mathbf{X}}_{L,T}(t_2)+2.
\end{align*}
Denote $a=\inf_{0<s\leq t}g^{\mathbf{X}}_{L,T}(s)$, $b=\inf_{t\leq s< T}g^{\mathbf{X}}_{L,T}(s)$. For any given $\epsilon>0$, there exists $u\in (0,t]$ such that
\begin{align*}
g^{\mathbf{X}}_{L,T}(u)\leq a+\epsilon,
\end{align*}
and there exists $v\in [t, T)$ such that
\begin{align*}
g^{\mathbf{X}}_{L,T}(v)\leq b+\epsilon.
\end{align*}
Note that
\begin{align}\label{eq:con3}
at+b(T-t)\leq \int^T_0 g^{\mathbf{X}}_{L,T}(s)\mathrm{d}s= \int^T_0 (f^{\mathbf{X}}_{L,T}(s)-1)\mathrm{d}s\leq 1-T.
\end{align}
Now appying the variation constraint to the interval $[u,v]$, we have
\begin{align*}
a+b+2\epsilon&\geq g^{\mathbf{X}}_{L,T}(u)+g^{\mathbf{X}}_{L,T}(v)\\
&\geq |g^{\mathbf{X}}_{L,T}(t)-g^{\mathbf{X}}_{L,T}(u)|+|g^{\mathbf{X}}_{L,T}(v)-g^{\mathbf{X}}_{L,T}(t)|-2\\
&\geq (g^{\mathbf{X}}_{L,T}(t)-a-\epsilon)_{+} +(g^{\mathbf{X}}_{L,T}(t)-b-\epsilon)_{+} -2.
\end{align*}
By the definition of $a$ and $b$, $a\leq g^{\mathbf{X}}_{L,T}(t)$ and $b\leq g^{\mathbf{X}}_{L,T}(t)$. Letting $\epsilon \to 0$, we have
\begin{align}\label{eq:con4}
g^{\mathbf{X}}_{L,T}(t)\leq a+b+1.
\end{align}
Combining (\ref{eq:con3}) and (\ref{eq:con4}) leads to
\begin{align*}
g^{\mathbf{X}}_{L,T}(t)\leq \max \left(\frac{1-T}{t}, \frac{1-T}{T-t}\right) + 1.
\end{align*}
Then for every $0<t<T$, an upper bound of $f^{\mathbf{X}}_{L,T}(t)$ is
\begin{align*}
f^{\mathbf{X}}_{L,T}(t)\leq \max \left(\frac{1-T}{t},\frac{1-T}{T-t} \right)+2.
\end{align*}
By Proposition \ref{extreme}, $f^{\mathbf{Y}}_{L,T}$ takes integer values for any periodic ergodic process $\mathbf Y$ with period $1$. Through ergodic decomposition, we further have the upper bound:
\begin{align*}
f^{\mathbf{X}}_{L,T}(t)\leq \max \left(\lfloor \frac{1-T}{t}\rfloor,\lfloor\frac{1-T}{T-t}\rfloor \right)+2.
\end{align*}

It remains to prove that such upper bound can be approached. For any $t\in (0,\frac{T}{2})$ such that $\frac{1-T}{t}$ is not an integer, define $f$ by
\begin{align*}
f(s)=\begin{cases}
1+\lfloor \frac{1-T}{t}\rfloor,~~&s\in (0,t),\\
2+\lfloor \frac{1-T}{t}\rfloor,~~&s\in [t, t+\varepsilon),\\
1,~~&s\in [t+\varepsilon,T),
\end{cases}
\end{align*}
where $\varepsilon$ is small enough so that $\int^T_0 f(s)\mathrm d s\leq 1$. As $f$ takes integer values and satisfies the condition $(TV)$, by Theorem \ref{inpro}, there exists an invariant intrinsic location functional $L$ and a periodic ergodic stationary process with period $1$ such that $f$ is the density of $L$ for such process. By similar construction, we can also find an invariant intrinsic location functional $L$ and a periodic ergodic process with period $1$ such that the density of $L$ for such process approaches $\lfloor \frac{1-T}{T-t} \rfloor$+2 at point $t$ for $t\in[\frac{T}{2}, T)$ satisfying $\frac{1-T}{T-t}$ is not an integer.
\end{proof}

We end this section by comparing the upper bound (\ref{upb2}) with the result (\ref{bounds}) for general stationary processes. For $t\leq \frac{T}{2}$, the following inequality holds between these two bounds:
\begin{align*}
\max \left\{\lfloor \frac{1-T}{t}\rfloor,\lfloor \frac{1-T}{T-t}\rfloor\right\}+2 \leq \frac{1-T}{t}+2\leq \frac{1}{t}=\max \left\{\frac{1}{t},\frac{1}{T-t}\right\}.
\end{align*}
For $t\geq \frac{T}{2}$,
\begin{align*}
\max \left\{\lfloor \frac{1-T}{t}\rfloor,\lfloor \frac{1-T}{T-t}\rfloor\right\}+2 \leq \frac{1-T}{T-t}+2\leq \frac{1}{T-t}=\max \left\{\frac{1}{t},\frac{1}{T-t}\right\}.
\end{align*}
Therefore, the upper bound in (\ref{upb2}) is always sharper than that in (\ref{bounds}). The improvement is most significant when $T$ is close to 1 and $t$ is close to $0$ or $T$.

\section{\textbf{First-time intrinsic location functionals}}\label{sec:5}
In this section, we introduce another type of intrinsic location functionals called the \emph{first-time intrinsic location functionals} via the partially ordered random set representation.
\begin{definition}
An intrinsic location functioanal $L$ is called a \emph{first-time intrinsic location functional}, if it has a partially ordered random set representation $(S(\mathbf X),\preceq)$ such that for any $t_1,t_2\in S(\mathbf{X})$,  $t_1\leq t_2$ implies $t_2\preceq t_1$.
\end{definition}

It is easy to see that the notion of the first-time intrinsic location functionals is a generalization of the first hitting times. As its name suggests, it contains all the intrinsic location functionals which can be defined as ``the first time'' that some condition is met.

\begin{proposition}\label{prop1}
Let $\mathbf{X}$ be a periodic stationary process with period $1$, and $L$ be a first-time intrinsic location functional. Fix $T\in(0,1]$. Then the density of $L$ on $(0,T)$ for $\mathbf{X}$ is non-increasing.
\end{proposition}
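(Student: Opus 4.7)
The plan is to reduce to the ergodic case and then use the partially ordered random set representation together with the density formula derived in the proof of Proposition \ref{extreme}. Concretely, by the ergodic decomposition (Theorem \ref{thm5}), any stationary periodic $\mathbf{X}$ with period $1$ can be written as a mixture of periodic ergodic processes with period $1$. Since a mixture of non-increasing densities is non-increasing, it suffices to prove the statement for a periodic ergodic $\mathbf{X}$. For such an $\mathbf{X}$, Proposition \ref{cha} gives $X(t) = g(t+U)$ for a deterministic periodic $g$ and $U \sim \mathrm{Uniform}[0,1]$, which puts us into the setting of the proof of Proposition \ref{extreme}.

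Next, I would invoke the partially ordered random set representation $(S(g),\preceq)$ of $L$ and recall the quantities
\begin{align*}
a_s &= \sup\{r \in \mathbb{R}: q\preceq s \text{ for all } q\in(s-r,s)\cap S(g)\},\\
b_s &= \sup\{r \in \mathbb{R}: q\preceq s \text{ for all } q\in(s,s+r)\cap S(g)\},
\end{align*}
together with the identification (up to c\`{a}dl\`{a}g modification)
\begin{align*}
f(t) = |\{s\in S(g)\cap[0,1): a_s \geq t,~b_s \geq T-t\}|.
\end{align*}
This is the formula established inside the proof of Proposition \ref{extreme}, so I may cite it directly.

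The heart of the argument is then a single observation specific to first-time intrinsic location functionals: by definition, whenever $t_1\leq t_2$ in $S(g)$ we have $t_2\preceq t_1$. Consequently, for every $s\in S(g)$ and every $q\in(s,s+r)\cap S(g)$ (with $q>s$) we automatically have $q\preceq s$, which forces $b_s = \infty$. Substituting this into the density formula yields
\begin{align*}
f(t) = |\{s\in S(g)\cap[0,1): a_s \geq t\}|,
\end{align*}
and the set on the right is clearly non-increasing in $t$, hence so is $f$. This completes the ergodic case, and via the opening reduction the general case.

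The only genuinely delicate point is to make sure that the ``$b_s=\infty$'' reduction is legitimate on the circle rather than the line; but periodicity does not interfere, because points $s+1,s+2,\dots$ of the shifted copies of $S(g)$ are all $>s$ and hence also $\preceq s$. Everything else is bookkeeping: the c\`{a}dl\`{a}g modification is monotone because it is the c\`{a}dl\`{a}g version of a monotone function, and monotonicity is preserved under the mixing step at the very start. I do not expect any genuine obstacle.
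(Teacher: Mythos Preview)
Your proposal is correct and follows essentially the same route as the paper's own proof: reduce to the ergodic case by ergodic decomposition, invoke the density formula from the proof of Proposition~\ref{extreme}, observe that the first-time condition forces $b_s=\infty$, and conclude that $f(t)=|\{s\in S(g)\cap[0,1): a_s\geq t\}|$ is non-increasing in $t$. The paper's argument is identical in structure and detail.
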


\begin{proof}
By ergodic decomposition, it suffices to prove the result for periodic ergodic process $\mathbf{X}$ with period $1$ having the representation
$X(t)=g(t+U)$, where $U$ is a uniform random variable on $[0,1]$. Let $(S,\preceq)$ be a partially ordered random set representation for $L$. By a similar argument as the discussion below (\ref{char3}), we have for $t\in(0,T)$,
\begin{align*}
f(t)=\left| \{s\in S(g)\cap (0,1]: a_s\geq t,~b_s\geq T-t\}\right |,
\end{align*}
where $a_s=\sup \{\Delta s\in \mathbb{R}: r\preceq s~\mbox{for all}~ r\in (s-\Delta s,s)\cap S(g)  \}$, $b_s= \sup \{\Delta s\in \mathbb{R}: r\preceq s~\mbox{for all}~ r\in(s,s+\Delta s)\cap S(g) \}$. By the definition of first-time intrinsic location functionals and that of $b_s$, we have
\begin{align*}
b_s=\infty,~ ~\mbox{for any}~s\in S(g).
\end{align*}
Thus for $t_1\leq t_2$,
\begin{align*}
f(t_2)=\left| \{s\in S(g)\cap (0,1]: a_s\geq t_2\}\right|~\mbox{and}~f(t_1)=\left| \{s\in S(g)\cap (0,1]: a_s\geq t_1\}\right|.
\end{align*}
If there exists $s\in S(g)\cap (0,1]$ such that $a_s\geq t_2$, then $a_s\geq t_2 \geq t_1$, which means that $f(t_1)\geq f(t_2)$. As a result, $f$ is non-increasing on the interval $(0,T)$.
\end{proof}

For any first-time intrinsic location functional $L$ and $T\leq 1$, let $I^M_{L,T}$ be the collection of the probability distributions of $L$ on $[0,T]$ for all periodic stationary processes with period $1$. Denote by $E^M_{T}$ the subset of $E_T$ consisting of the distributions with non-increasing density functions on $(0,T)$ and no point mass at $T$. Then we have the following result of the structure of $I^M_{L,T}$, parallel to Section $4$.
\begin{proposition}\label{mextreme}
$I^M_{L,T}$ is a convex subset of $\mathcal P_T$ and $I^M_{L,T}\subseteq C(E^M_{T})$.
\end{proposition}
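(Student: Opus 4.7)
The plan is to follow the same blueprint as the proof of Theorem \ref{convexh} (and its invariant adaptation in Corollary \ref{inthm}), since Proposition \ref{mextreme} is that theorem specialized to first-time intrinsic location functionals. Convexity of $I^M_{L,T}$ is immediate: if $F_i=F^{\mathbf X_i}_{L,T}$ for $i=1,2$, then $aF_1+(1-a)F_2=F^{\mathbf X}_{L,T}$ where $\mathbf X$ is the mixture of $\mathbf X_1$ and $\mathbf X_2$ with weights $a$ and $1-a$, and this mixture is again a periodic stationary process with period $1$.

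For the inclusion $I^M_{L,T}\subseteq C(E^M_T)$, I will apply the ergodic decomposition together with the Prokhorov/Krein-Milman argument from the proof of Theorem \ref{convexh}, simply replacing $E_T$ by $E^M_T$. What needs to be established anew is that (i) the distribution of $L$ for every periodic ergodic process with period $1$ belongs to $E^M_T$, and (ii) $E^M_T$ remains weakly closed (hence weakly compact as a closed subset of the compact $E_T$). Membership in $E_T$ is already provided by Proposition \ref{extreme}; the extra monotonicity constraint (decreasing density on $(0,T)$) is precisely Proposition \ref{prop1}; and the weak closedness of the two additional constraints (monotonicity of the density and absence of mass at $T$) is routine once combined with the integer-valued and variation-constraint structure inherited from $E_T$, noting in particular that a decreasing integer-valued density on $(0,T)$ with integral at most $1$ is bounded near $T$ by $1$, ruling out concentration of mass at $T$ in the weak limit.

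The only step that does not transcribe directly from earlier arguments is showing that an ergodic periodic process puts no mass on $\{T\}$. By Proposition \ref{cha}, write $X(t)=g(t+U)$ with $U$ uniform on $[0,1]$; by shift compatibility, $L(\mathbf X,[0,T])=L(g,[U,U+T])-U$, so a point mass at $T$ would correspond to the set
\begin{equation*}
A=\{U\in[0,1]:L(g,[U,U+T])=U+T\}
\end{equation*}
having positive Lebesgue measure. For a first-time intrinsic location with representation $(S(g),\preceq)$, membership $U\in A$ forces $U+T$ to be the smallest element of $S(g)\cap[U,U+T]$, hence $U+T\in S(g)$ and $S(g)\cap[U,U+T)=\emptyset$. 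If $U_1<U_2$ both lie in $A$, then $U_1+T\in S(g)$ combined with $S(g)\cap[U_2,U_2+T)=\emptyset$ forces $U_1+T<U_2$, i.e.\ $U_2-U_1>T$. Thus the elements of $A$ are $T$-separated, $A$ is finite, and hence Lebesgue-null. This separation argument is the main and essentially the only new ingredient; everything else is a direct transposition of the arguments already used for Theorem \ref{convexh} and Corollary \ref{inthm}.
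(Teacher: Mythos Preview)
Your proposal is correct and takes essentially the same approach as the paper, which simply states that the proof ``follows in a similar way to that of Theorem \ref{convexh} and is omitted.'' You have carefully filled in the one detail the paper glosses over, namely that the distribution of a first-time intrinsic location functional for a periodic ergodic process puts no mass at $T$; your $T$-separation argument for this point is valid and is indeed the natural way to see it.
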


The proof of Proposition \ref{mextreme} follows in a similar way to that of Theorem \ref{convexh} and is omitted.

As in the previous cases, the other direction also holds.
\begin{proposition}\label{thmin}
For any $F\in C(E^M_T)$, there exists a first-time intrinsic location functional and a periodic stationary process with period $1$, such that $F$ is the distribution of this first-time intrinsic location functional for such process.
\end{proposition}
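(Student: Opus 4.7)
The plan is to mirror the strategy of the proof of Theorem \ref{inpro}: first reduce by ergodic decomposition to an extremal distribution $F\in E^M_T$, then explicitly construct a periodic ergodic process together with a first-time intrinsic location functional that realize $F$. For the reduction, any $F\in C(E^M_T)$ admits, by exactly the Krein--Milman argument used in the proof of Theorem \ref{convexh}, an integral representation $F=\int_{G\in E^M_T}G\,\mathrm d\lambda(G)$ for some probability measure $\lambda$ on $E^M_T$. Thus if a single first-time intrinsic location functional $L$ realizes every $G\in E^M_T$ by some periodic ergodic process $\mathbf Y_G$, then the $\lambda$-mixture of the $\mathbf Y_G$'s is a periodic stationary process whose $L$-distribution equals $F$. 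It therefore suffices to handle the case $F\in E^M_T$.

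Fix $F\in E^M_T$ with c\`{a}dl\`{a}g density $f$ on $(0,T)$. Since $f$ is decreasing, non-negative integer valued, and has no point mass at $T$, it admits the canonical decomposition $f(t)=\sum_{i=1}^n \mathbb{I}_{(0,v_i]}(t)$ with $0<v_1\leq v_2\leq\cdots\leq v_n\leq T$ (and $n$ possibly infinite). Setting $p_0=F(\{0\})$ and $p_\infty=F(\{\infty\})$, integration gives $\sum_i v_i=1-p_0-p_\infty$. The intrinsic location I use is the first hitting time of level $-1$, $L(g,I)=\inf\{t\in I:g(t)=-1\}$, whose partially ordered random set representation is $S(g)=\{t:g(t)=-1\}$ ordered by the reverse of the usual order, confirming that $L$ is a first-time intrinsic location functional. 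The process is $\mathbf X(t)=g(t+U)$ for a continuous periodic function $g$ of period $1$ and $U$ uniform on $[0,1]$, so that by shift compatibility $L(\mathbf X,[0,T])=\min\bigl(\{g=-1\}\cap[U,U+T]\bigr)-U$ (with $\min\emptyset=\infty$).

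The function $g$ is designed so that $\{g=-1\}\cap[0,1)$ consists of one closed interval of length $p_0$ together with a finite or countable collection of isolated points at which $g$ touches $-1$ from above; between consecutive elements of this set, $g$ bumps up above $-1$ and returns. The interval produces the atom $p_0$ at $0$, because $L(\mathbf X,[0,T])=0$ exactly when $U$ lies in (a periodic translate of) that interval. An arc of length $\ell$ between two consecutive elements of $\{g=-1\}$ contributes, by a short computation that tracks $L$ as $U$ sweeps the arc, a uniform density $1$ on $(0,\min(\ell,T))$ to the law of $L$ and a point mass $(\ell-T)^+$ at $\infty$. If $p_\infty=0$, I place the tangency points so that the arcs on the circle have lengths $v_1,\dots,v_n$ in some order (summing to $1-p_0$), reproducing $f$ with no mass at infinity. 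If $p_\infty>0$ and $n\geq 1$, then Property 3 of Proposition \ref{extreme}, applied to $F\in E^M_T\subset E_T$, forces $f(t)\geq 1$ for all $t\in(0,T)$, hence $v_n=T$; I then use arcs of lengths $v_1,\dots,v_{n-1}$ and one ``large'' arc of length $T+p_\infty$, whose total is again $1-p_0$ and which contributes exactly $p_\infty$ at $\infty$ together with one unit of density on all of $(0,T)$, matching the $v_n=T$ block. The degenerate cases $F=\delta_\infty$ and $F=\delta_0$ correspond to $g\equiv 0$ and $g\equiv -1$ respectively.

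The routine but somewhat tedious step is the per-arc bookkeeping: partitioning $[0,1)$ according to the arc containing $U$ and verifying that the aggregate contribution across all arcs is exactly $F$. The main obstacle is the case $p_\infty>0$, where the identity $v_n=T$ supplied by Proposition \ref{extreme} is essential: without it, producing the required atom at $\infty$ via a single ``large'' arc would necessarily create a spurious block of density $1$ on $(0,T)$ beyond what $f$ allows. Once this point is handled and the per-arc computation is in place, the ergodic-decomposition step of the first paragraph completes the argument for a general $F\in C(E^M_T)$.
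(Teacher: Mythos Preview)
Your proposal is correct and follows essentially the same route as the paper: reduce via ergodic decomposition to $F\in E^M_T$, use the first hitting time of level $-1$ as the single first-time intrinsic location functional, and build a periodic $g$ whose level set $\{g=-1\}$ is a closed interval of length $p_0$ together with tangency points spaced so that the inter-point arc lengths match the blocks $v_i$ of the decreasing integer-valued density. Your identification of the crucial fact $v_n=T$ when $p_\infty>0$ (via Property~3 of Proposition~\ref{extreme}) and the resulting ``large'' arc of length $T+p_\infty$ exactly mirrors the paper's observation that $u_0=T$ and its construction of the arc between $s_\infty+a$ and $1$; the only differences are cosmetic (your $v_i$ are ordered increasingly rather than decreasingly, and you make the per-arc density contribution explicit where the paper declares it ``routine'').
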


\begin{proof}
We can actually use a single first-time intrinsic location functional for the proof. For example, let $L(g,I)=L_2(g,I)=\inf\{t\in I: g(t)=-1\}$ as defined in the proof of Theorem \ref{chage}. By ergodic decomposition, it suffices to show the result for distributions in $E^M_T$. Let $F$ be a probability distribution in $E_T^M$. Equivalently, $F$ is a probability distribution supported on $[0,T]\cup\{\infty\}$, with a possible point mass $a$ at 0, a possible point mass at $\infty$, and a non-increasing density function $f$ which takes non-negative integer values. Our goal is to show that there exists a periodic ergodic process with period $1$ such that the distribution of the first time reaching level $-1$ between $0$ and $T$ for such process is $F$. For ease of exposition, assume the point masses at 0 and at $\infty$ are both positive. The degenerate cases can be handled in a similar way. Since $f$ is non-increasing on $(0,T)$ with non-negative integer values, it can be written as
\begin{align*}
f(t)=\sum^{\infty}_{i=0} \mathbb I_{(0,u_i)}(t),
\end{align*}
where $u_i\geq u_{i+1}$. Define $s_i=\sum_{k=1}^i u_k$, $i=1,2,...$ and $s_0=0$. Let
\begin{align*}
g(s_i)=-1,~\mbox{for}~i=0,1,\dots
\end{align*}

In addition to $s_0,s_1,\dots$, we set $g(t)=-1$ for $t\in[s_{\infty},s_\infty+a]$ and $g(1)=-1$. Note that since $\int_0^1 f(t)\mathrm{d} t\leq 1$, $0\leq s_\infty\leq s_\infty+a\leq 1$. Next we join the consecutive points $(s_i,-1)$ and $(s_{i+1},-1)$, $i=0,1,\dots$ using V-shaped curves satisfying some Lipschitz condition with, for example, Lipschitz constant $1$. Similarly, use a V-shaped curve to join $(s_\infty+a,-1)$ and $(1,-1)$. Therefore, we can construct a periodic deterministic function $g$ with period $1$, and the required periodic ergodic process can be written as $X(t)=g(t+U)$ for $t\in \mathbb R$, where $U$ follows a uniform distribution on $[0,1]$. It is then routine to check that the distribution of $L$ is exactly $F$ by expressing the value of $L$ as a function of $U$.
\end{proof}

We have now all the pieces to complete the proof of Theorem \ref{chage}.

\begin{proof}[Proof of Theorem \ref{chage}, Part III]
Let $F\in E_T$, and $f$ be its density function on $(0,T)$. Recall that our goal in this part is to show that if $f$ is non-increasing with $\sup\{t: f(t)>0\}<T$, then for the intrinsic location functional $L_2(g,I)=\inf\{t\in I: g(t)=-1\}$, there exists a periodic ergodic process $\mathbf X$, such that $F$ is the distribution of $L_2$ on $[0,T]$ for $\mathbf X$. Note that since $f(t)$ takes value 0 as $t$ approaches $T$, by the definition of $E_T$, $F$ do not have a point mass at $T$. As a result, $F\in E_T^M$. Thus, by the proof of Proposition \ref{thmin}, $F$ is the distribution of $L_2$ for some periodic ergodic process with period 1.
\end{proof}

Denote by $\mathcal L_M$ the set of first-time intrinsic location functionals. Let $I_T^M=\bigcup_{L\in\mathcal L_M}I^M_{L,T}$ be the collection of all the possible distributions for first-time intrinsic location functionals and periodic stationary processes with period 1 on $[0,T]$. Denote by $A^M_T$ the class of probability distribution on $(0,T)$ with the properties that the corresponding density is c\`{a}dl\`{a}g and non-increasing. We would like to give a verification whether a function in $A^M_T$ is also in $I^M_T$. The recently developed concept of joint mixability \citep{wang2013bounds} is helpful.

In the following part, for any set $A$ of distributions,  we write $f\in_{d} A$, if there exists $F\in A$ such that $f$ is the corresponding density part of $F$.

In the definition below, we slightly generalize the concept of joint mixability to the case of possibly countably many distributions.
In the following $N$ is either a positive integer or it is infinity. If $N=\infty$, we interpret any tuple $(x_1,\dots,x_N)$ as $(x_i,~i=1,2,\dots)$.  Joint mixability and intrinsic location functionals are connected in Proposition \ref{prop:jm} below.

\begin{definition}\citep{wang2013bounds}\label{de:jm} Suppose $N\in \mathbb N\cup\{\infty\}$.
A random vector $(X_1,\dots,X_N)$ is said to be a joint mix if
$\mathbb P(\sum_{i=1}^N X_i = C)=1$ for some $C\in \mathbb R$. An $N$-tuple of distributions $(F_1,\dots,F_N)$ is said to be jointly mixable if there exists a joint mix $\mathbf X=(X_1,\dots,X_N)$ such that $X_i\sim F_i$, $i=1,\dots,N$.
\end{definition}

\begin{proposition}\label{prop:jm}
For any $f\in_d A^M_T$, let $N=\lceil f(0+) \rceil$, and define the distribution functions
\begin{equation}\label{eq:jm1}
F_i:\mathbb R \to [0,1], ~~ x\mapsto \min\{(i-f(x)\mathbb I_{\{x<T\}})_+, 1\}\mathbb I_{\{x\geq 0\}},~~i=1,\dots,N.
\end{equation}
Then
$f \in_d I^M_T$ if there exists a random vector $\mathbf X=(X_1,\dots,X_N)$ such that $X_i\sim F_i$, $i=1,\dots,N$ and $\mathbb P(\sum_{i=1}^N X_i\leq 1)=1$.
In particular, $f \in_d I^M_T$ if $(F_1 ,\dots, F_N)$ is jointly mixable.
\end{proposition}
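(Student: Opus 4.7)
The plan is to construct, for each realization $\mathbf{x}=(x_1,\dots,x_N)$ of the given random vector $\mathbf{X}$ with $\sum_{i=1}^N x_i\leq 1$, a periodic deterministic function $g_{\mathbf{x}}$ of period $1$, use an independent uniform shift to obtain a periodic stationary process, and take as the intrinsic location the first hitting time of $-1$, as in Proposition~\ref{thmin}. Set $s_0=0$ and $s_i=x_1+\cdots+x_i$. Prescribe $g_{\mathbf{x}}\equiv -1$ on the closed hit set $\{s_0,\dots,s_N\}\cup[s_N,1]$, joining consecutive hit points $s_{i-1}$ and $s_i$ (with $x_i>0$) by a Lipschitz V-shape dipping strictly below $-1$, so that $\{g_{\mathbf{x}}=-1\}\cap[0,1]$ equals the prescribed hit set. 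Define $\mathbf{Y}(t):=g_{\mathbf{X}}(t+U)$ with $U\sim\mathrm{Unif}[0,1]$ independent of $\mathbf{X}$; this is a mixture of ergodic processes of the form in Proposition~\ref{cha}, hence a periodic stationary process of period $1$, and $L(g,I):=\inf\{t\in I:g(t)=-1\}$ is a first-time intrinsic location functional.

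The density of $L(\mathbf{Y},T)$ on $(0,T)$ is then computed by conditioning on $\mathbf{X}$. When $U$ falls in the hit region (of Lebesgue measure $1-\sum x_i$) we have $L(\mathbf{Y},T)=0$, contributing only to a point mass at $0$. When $U$ falls in the $i$-th gap $(s_{i-1},s_i)$, the value $L(\mathbf{Y},T)=s_i-U$ is uniform on $(0,x_i)$, contributing Lebesgue density $\mathbb{I}_{\{d<x_i\}}$ on $(0,T)$, with any mass beyond $T$ going to the point mass at $\infty$. The key structural choice is to fill the leftover $[s_N,1]$ with hits rather than leave it as an extra gap: were it left as a gap of length $1-\sum x_i$, a spurious term $\mathbb{P}(\sum_j X_j<1-d)$ would appear in the density on $(0,T)$; filling it with hits redirects that mass to the point mass at $0$. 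After averaging over $\mathbf{X}$, the density at $d\in(0,T)$ becomes
\begin{equation*}
\sum_{i=1}^N\mathbb{P}(X_i>d)=\sum_{i=1}^N(1-F_i(d)).
\end{equation*}

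The crux is then the identity $\sum_{i=1}^N(1-F_i(d))=f(d)$ for all $d\in(0,T)$. Writing $f(d)=k+\alpha$ with $k\in\mathbb{Z}_{\geq 0}$ and $\alpha\in[0,1)$, the definition of $F_i$ gives $F_i(d)=0$ for $i\leq k$, $F_{k+1}(d)=1-\alpha$, and $F_i(d)=1$ for $i\geq k+2$. Since $f$ is decreasing, $N=\lceil f(0+)\rceil\geq\lceil f(d)\rceil\geq k+1$, so the index $k+1$ lies in the summation range, giving $\sum_{i=1}^N(1-F_i(d))=k+\alpha=f(d)$. Hence the Lebesgue density of $L(\mathbf{Y},T)$ on $(0,T)$ equals $f$, proving $f\in_{d} I^M_T$. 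For the ``in particular'' clause, joint mixability yields $\mathbf{X}$ with $\sum X_i=C$ almost surely; taking expectation and using the same identity shows $C=\sum_{i=1}^N\mathbb{E}[X_i]=\int_0^T f(s)\,\mathrm{d}s\leq 1$, so the required hypothesis holds.

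The main obstacle I anticipate is the bookkeeping around the leftover region: verifying that filling $[s_N,1]$ with hits exactly cancels what would otherwise be the spurious density term $\mathbb{P}(\sum X_j<1-d)$ on $(0,T)$, together with a small case analysis for zero-length arcs (when some $X_i$ has a point mass at $0$, in which case $s_{i-1}=s_i$ and no V-shape is needed) and the edge case $\sum X_i=1$ a.s.\ (when the hit interval degenerates to a single point on the circle and there is no point mass at $0$). Once these points are handled, the identity $\sum(1-F_i)=f$ and the density calculation are direct adaptations of the construction in Proposition~\ref{thmin}.
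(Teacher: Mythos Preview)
Your proposal is correct and follows essentially the same approach as the paper: both mix over realizations $\mathbf{x}$ of $\mathbf{X}$, associate to each $\mathbf{x}$ the integer-valued decreasing density $f_{\mathbf{x}}(y)=\sum_{i=1}^N\mathbb{I}_{\{y\le x_i\}}$, and verify the key identity $\mathbb{E}[f_{\mathbf{X}}(d)]=\sum_{i=1}^N(1-F_i(d))=f(d)$. The only difference is one of packaging: the paper observes $f_{\mathbf{x}}\in_d E^M_T$ and invokes $I^M_T=C(E^M_T)$ (Proposition~\ref{thmin}), whereas you unroll that invocation by re-doing the explicit first-hitting-time construction of Proposition~\ref{thmin} for each $\mathbf{x}$ and then averaging.
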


\begin{proof}
Suppose that there exists a random vector $\mathbf X=(X_1,\dots,X_N)$ such that $X_i\sim F_i$, $i=1,\dots,N$ and $\mathbb P(\sum_{i=1}^N X_i\leq 1)=1$. For $\mathbf x=(x_1,\dots,x_N)$ satisfying $\sum_{i=1}^N x_i\leq 1$,
define  $$f_{\mathbf x}: [0,T] \to \mathbb R_+,~~ y\mapsto \sum_{i=1}^N \mathbb I_{\{y\le x_i\}}.$$
Obviously $f_{\mathbf x}$ is a non-increasing function and we can check
$$\int_0^T f_{\mathbf x}(y) \mathrm{d}y= \sum_{i=1}^N\int_0^T  \mathbb I_{\{y\le x_i\}}\mathrm{d}y= \sum_{i=1}^N x_i \le 1.$$
Thus, $f_{\mathbf x}$ is a non-increasing function on $[0,T]$ taking values in $\mathbb N_0$,  $\int_0^T f_{\mathbf x}(y) \mathrm{d}y\leq 1$, and hence $f_{\mathbf x}\in_d E^M_T$.
Moreover, for $y \in [0,T]$,
 \begin{align*}\mathbb{E}[f_{\mathbf X}(y)]&= \mathbb{E}\left[\sum_{i=1}^N \mathbb I_{\{y\leq X_i\}}\right]\\& =\lfloor f(y) \rfloor + \mathbb{E}\left[\mathbb I_{\{y\leq X_{\lfloor f(y) \rfloor}\}}\right]
 =\lfloor f(y) \rfloor + (f(y) -\lfloor f(y) \rfloor )=f(y).\end{align*}
Therefore, we conclude that $f \in_d I^M_T$ since it is a convex combination of $f_{\mathbf x}$.

Now suppose that  $(F_1 ,\dots, F_N)$ is jointly mixable.  Then there exists a joint mix $\mathbf X=(X_1,\dots,X_N)$ such that $X_i\sim F_i$, $i=1,\dots,N$ and $\mathbb P(\sum_{i=1}^N X_i=C)=1$ for some $C\in \mathbb R$. It suffices to verify that $C\leq 1$, which follows from
\begin{align}
C= \sum_{i=1}^N \mathbb{E}[X_i]&= \sum_{i=1}^N \int_0^T (1-F_i(x))\mathrm{d} x \nonumber\\
&=  \sum_{i=1}^N \int_0^T \min\{(f(x)-i+1)_+, 1\} \mathrm{d} x=\int_0^T  f (x) \mathrm{d} x \leq 1. \label{eq:jm2}
\end{align}
This completes the proof.
\end{proof}
\begin{remark}
In this section, $N$ might be infinity. It can be easily checked that in the case of $N=\infty$, the limit $\sum_{i=1}^N X_i$ in the above proof is well-defined since $\sum_{i=1}^N \mathbb{E}[X_i]\le 1$ and $X_i\ge 0$, $i=1,\dots,N$.
\end{remark}

\begin{corollary}
For a given density function $f\in_d A^M_T$, if there exists a step function $g\in_d E^M_T$ such that
\begin{align*}
g(t)\geq f(t),~ \mbox{for all}~ t\in (0,T),
\end{align*}
then $f\in_d I^M_T$.
\end{corollary}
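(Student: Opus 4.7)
My plan is to invoke Proposition~\ref{prop:jm} by exhibiting an explicit comonotonic coupling of the marginals $F_1,\dots,F_N$ whose sum is almost surely at most~$1$; the only role of the step function $g$ will be to supply a pointwise integer upper bound that survives integration. Let $N=\lceil f(0+)\rceil$, let $F_i$ be defined by~(\ref{eq:jm1}), let $U\sim\mathrm{Unif}(0,1)$, and set
\[
X_i := f^{-1}(i-1+U), \qquad i=1,\dots,N,
\]
where $f^{-1}(y):=\inf\{x\in(0,T]:f(x)<y\}$ is the right-continuous generalized inverse of the c\`adl\`ag decreasing density $f$.

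First I would verify $X_i\sim F_i$. Because $f$ is decreasing and right-continuous, the equivalence $f^{-1}(y)\le x \Leftrightarrow y>f(x)$ holds outside a Lebesgue-null set of $y$, whence
\[
\mathbb P(X_i\le x)=\mathbb P\bigl(U>f(x)-i+1\bigr)=\min\{(i-f(x))_+,1\}=F_i(x),\qquad x\in(0,T).
\]
The possible jump $N-f(0+)$ that $F_N$ places at~$0+$ is supplied by the event $\{N-1+U\ge f(0+)\}$, on which $X_N=0$.

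Next I would bound $\sum_i X_i$. A layer-cake identity converts the sum into an integral,
\[
\sum_{i=1}^N X_i=\sum_{i=1}^N\bigl|\{x\in(0,T]:f(x)\ge i-1+U\}\bigr|=\int_0^T h_U(x)\,\mathrm dx,\qquad h_U(x):=\sum_{i=1}^N\mathbb I_{\{f(x)\ge i-1+U\}}.
\]
Fixing $x$ and writing $k=\lfloor f(x)\rfloor$, $s=f(x)-k\in[0,1)$, a direct case check shows that $h_U(x)=k$ if $U>s$ and $h_U(x)=k+1$ if $U\le s$, so in either case $h_U(x)\le\lceil f(x)\rceil$. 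This is precisely where the hypothesis on $g$ enters: since $g$ is $\mathbb N_0$-valued with $g\ge f$, we have $\lceil f(x)\rceil\le g(x)$, hence $h_U\le g$ pointwise. Integrating and using $g\in_d E^M_T$, which forces $\int_0^T g\le 1$, gives $\sum_{i=1}^N X_i\le\int_0^T g\le 1$ almost surely. Proposition~\ref{prop:jm} then yields $f\in_d I^M_T$.

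The only technical obstacle I anticipate is the boundary bookkeeping at $x=0$ (matching the possible jump of $F_N$) and the case $N=\infty$, which arises precisely when $f(0+)=\infty$; in that case the sum $\sum_{i=1}^{\infty} X_i$ becomes a series of non-negative terms whose partial sums are dominated by $\int_0^T g\le 1$, so monotone convergence delivers the same bound. The substantive content is the single observation that integer-valuedness of $g$ upgrades the trivial bound $h_U\le\lceil f\rceil$ to $h_U\le g$, which is what keeps the sum bound at~$1$ after integration.
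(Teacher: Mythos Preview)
Your argument is correct and shares the paper's key inequality $\lceil f\rceil\le g$ (hence $\int_0^T\lceil f\rceil\le\int_0^T g\le 1$), but you reach it by a longer road. The paper does not build any particular coupling: since each $F_i$ is supported in $[f^{-1}(i),f^{-1}(i-1)]$, one has $X_i\le f^{-1}(i-1)$ almost surely for \emph{any} random vector with these marginals, so
\[
\sum_{i=1}^N X_i\ \le\ \sum_{i=1}^N f^{-1}(i-1)\ \le\ \int_0^T g(t)\,\mathrm dt\ \le\ 1,
\]
and Proposition~\ref{prop:jm} applies immediately. Your comonotonic construction $X_i=f^{-1}(i-1+U)$ and the layer-cake computation recover exactly the same bound (indeed your $\int_0^T h_U\le\int_0^T\lceil f\rceil$ is the same quantity, and is attained in the limit $U\downarrow 0$), so the extra work of verifying $X_i\sim F_i$, handling the jump of $F_N$ at $0$, and the case $N=\infty$ buys nothing here. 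The substantive insight---that integer-valuedness of $g$ upgrades $g\ge f$ to $g\ge\lceil f\rceil$, making the sum of right endpoints at most $1$---is identical in both proofs; the paper simply exploits it via the trivial deterministic bound on each coordinate rather than through a specific coupling.
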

\begin{proof}
For any $f\in_d A^M_T$, take $N$ and $F_i$, $i=1,\dots, N$ as defined in Proposition \ref{prop:jm}. Let $\mathbf X=(X_1,\dots, X_N)$ be a random vector such that $X_i\sim F_i$, $i=1, \dots, N$. Then we have
\begin{align*}
\sum_{i=1}^N X_i\leq \sum^{N}_{i=1}f^{-1}(i-1)\leq \int^T_0 g(t)\mathrm{d}t\leq 1
\end{align*}
hold almost surely. Thus, $f\in_d I^M_T$ by Proposition \ref{prop:jm}.
\end{proof}

\begin{corollary}\label{coro:2}
Suppose that $f\in_d A^M_T$ is convex on $[0,T]$ and
\begin{equation} \label{eq:jm3}
\sum_{i=0}^N f^{-1}(i)\leq 1 +f^{-1}(1).
\end{equation}
Then $f\in_d I^M_T$.
\end{corollary}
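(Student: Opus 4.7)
The plan is to apply Proposition~\ref{prop:jm}: it suffices to construct a random vector $\mathbf{X}=(X_1,\dots,X_N)$ with $X_i\sim F_i$ from \eqref{eq:jm1} and $\sum_{i=1}^N X_i\leq 1$ almost surely. Write $a_i:=f^{-1}(i)$ and $b_i:=f^{-1}(i-1)$ for the support endpoints of $X_i$; the intervals $[a_i,b_i]$ are consecutive ($b_{i+1}=a_i$), and, using $f^{-1}(N)=0$, condition \eqref{eq:jm3} rewrites as
\[
b_1+\sum_{i=2}^N a_i \;=\; T+\sum_{i=2}^{N-1}f^{-1}(i)\;\leq\;1.
\]

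First I would extract two structural consequences of convexity of $f$: (i) the density $-f'$ is non-negative and non-increasing on $(0,T)$, so each $F_i$ has a non-increasing density on $[a_i,b_i]$; and (ii) $f^{-1}$ is also convex and decreasing, so the support lengths $b_i-a_i$ are non-increasing in $i$, with the longest being $b_1-a_1=T-f^{-1}(1)$. Consequently the displayed bound is exactly $\max_j\bigl(b_j+\sum_{i\neq j}a_i\bigr)\leq 1$, i.e.\ ``largest support's right endpoint plus all other left endpoints is at most $1$,'' the familiar quantity driving joint mixability criteria for monotone marginals.

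The coupling I would try first is anti-monotonic: let $U\sim\mathrm{Unif}(0,1)$ and set $X_1=F_1^{-1}(U)$, $X_i=F_i^{-1}(1-U)$ for $i\geq 2$. On the continuous parts this reads $X_1=f^{-1}(1-U)$ and $X_i=f^{-1}(i-1+U)$. The sum $g(U):=\sum_{i=1}^N X_i$ is then a convex function of $U$ (a sum of convex functions via convexity of $f^{-1}$), hence attains its essential supremum on $[0,1]$ at an endpoint. At $U=1^-$ one computes $g(1)=T+\sum_{i=2}^{N-1}f^{-1}(i)\leq 1$ directly from \eqref{eq:jm3}.

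The main obstacle is the other endpoint $g(0^+)=2f^{-1}(1)+\sum_{i=2}^{N-1}f^{-1}(i)$, which the naive bound only controls by $\leq 1+f^{-1}(1)$, together with the possibility that $F_1$ has a point mass at $T$ (when $f(T)>0$) that forces the formula for $X_1$ to be modified near $U=0$. To close the argument one combines the mean identity $\sum_i\mathbb{E}[X_i]=\int_0^T f\leq 1$ with \eqref{eq:jm3}, and appeals to the standard joint-mixability-type result that for distributions with non-increasing densities on adjacent supports the infimum over couplings of the essential supremum of $\sum_i X_i$ equals $\max\{\sum_i\mathbb{E}[X_i],\,\max_j(b_j+\sum_{i\neq j}a_i)\}$; both quantities are $\leq 1$ under our hypotheses, so a suitable refinement---for instance, splitting $[0,1]$ into sub-intervals on each of which $X_1$ is paired anti-monotonically with a different subset of $\{X_2,\dots,X_N\}$---delivers $\sum_i X_i\leq 1$ almost surely and completes the application of Proposition~\ref{prop:jm}.
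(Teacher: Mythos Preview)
Your proposal is correct and lands on the same argument as the paper: both use convexity of $f$ to ensure each $F_i$ has a decreasing density and to identify the maximal support length as $f^{-1}(0)-f^{-1}(1)$, so that \eqref{eq:jm3} becomes $\sum_{i=1}^N f^{-1}(i)+\max_i\{f^{-1}(i-1)-f^{-1}(i)\}\leq 1$, and then invoke the known result for decreasing-density marginals (the paper cites Corollary~4.7 of \cite{jakobsons2015general}) giving a coupling with $\mathrm{ess\mbox{-}sup}\bigl(\sum_i X_i\bigr)=\max\bigl\{\sum_i\mathbb{E}[X_i],\,\max_j(b_j+\sum_{i\neq j}a_i)\bigr\}\leq 1$. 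Your preliminary explicit anti-monotonic coupling is an unnecessary detour---as you acknowledge, it fails at one endpoint---and the paper simply goes straight to the cited external result rather than sketching a ``suitable refinement.''
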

\begin{proof}
Let $N=\lceil f(0+) \rceil$ and $F_i,~i=1,\dots, N$ be as in \eqref{eq:jm1}.
Denote by $\mu_i$ the mean  of $F_i$ for $i=1,\dots, N$. Apparently $F_i$ has a non-increasing density supported in $[f^{-1}(i),f^{-1}(i-1)]$ for each $i=1,\dots, N$. By the convexity of $f$, we have $$\sum_{i=1}^N f^{-1}(i)+\max\{f^{-1}(i-1)-f^{-1}(i):i=1,\dots,N\} =\sum_{i=0}^N f^{-1}(i) -f^{-1}(1)\leq 1.$$
Since each $F_i$ has non-increasing densities, conditions in Corollary 4.7 of \cite{jakobsons2015general} are satisfied, giving that there exists $\mathbf X=(X_1,\dots,X_N)$ such that $X_i\sim F_i$, $i=1,\dots,N$ and
$$
\mathrm{ess\mbox{-}sup}\left(\sum_{i=1}^N X_i\right)=\max\left\{\sum_{i=1}^N f^{-1}(i) + \max_{i=1,\dots,N}\{f^{-1}(i-1)-f^{-1}(i)\}, \sum_{i=1}^N\mu_i\right\}\leq 1.
$$
The corollary follows from Proposition \ref{prop:jm}.
\end{proof}

\begin{remark}
Formally,  Corollary 4.7 of \cite{jakobsons2015general} only gives,  for any $\epsilon>0$ and $N\in \mathbb N$, the existence of $\mathbf X=(X_1,\dots,X_N)$ such that
 $$
 \mathrm{ess\mbox{-}sup}\left(\sum_{i=1}^N X_i\right)<\max\left\{\sum_{i=1}^N f^{-1}(i) + \max_{i=1,\dots,N}\{f^{-1}(i-1)-f^{-1}(i)\}, \sum_{i=1}^N\mu_i\right\}+\epsilon.
 $$
 A standard compactness argument would justify the case $\epsilon=0$ and $N=\infty$.
    Corollary 4.7 of \cite{jakobsons2015general}   requires the joint mixability of non-increasing densities; see Theorem 3.2 of \cite{wang2016joint}. For  $f\in_d A^M_T$, there is generally no constraints (except for location constraints) on the distributions $F_1,\dots,F_N$.
It is a difficult task to analytically verify whether a given tuple of distributions is jointly mixable.
For some other known necessary and sufficient conditions for joint mixability, see \cite{wang2016joint}.
\end{remark}

\begin{corollary}
Suppose that $f\in_d A^M_T$ is linear on its essential support $[0, b]$ and $f(b)=0$. Then $f\in_d I^M_T$.
\end{corollary}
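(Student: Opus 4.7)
My plan is to deduce the conclusion from Corollary~\ref{coro:2}. Writing $c = f(0+)$, the density has the form $f(t) = c(1 - t/b)\mathbb{I}_{[0,b]}(t)$ (the degenerate case $c = 0$ being trivial). The left-derivative of $f$ equals $-c/b$ on $(0, b)$ and $0$ on $(b, T)$, so it is nondecreasing; hence $f$ is convex on $[0, T]$, and the first hypothesis of Corollary~\ref{coro:2} holds. It remains to verify $\sum_{i=0}^N f^{-1}(i) \leq 1 + f^{-1}(1)$ for $N = \lceil c \rceil$, using the generalized inverse $f^{-1}(y) = b(1 - y/c)_+$ (with $f^{-1}(y) = 0$ when $y > c$).

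Two constraints on $b$ are at hand: $b \leq T \leq 1$, because the essential support lies in $[0, T]$, and $\int_0^T f = cb/2 \leq 1$ (since $F$ is a sub-probability on $(0, T)$), giving $b \leq 2/c$. I split into cases on $N$. For $N = 1$ (so $c \leq 1$), $f^{-1}(1) = 0$ and the inequality reduces to $b \leq 1$. For $N = 2$ (so $c \in (1, 2]$), direct substitution yields $\sum_{i=0}^2 f^{-1}(i) - f^{-1}(1) = b$, so the inequality is again $b \leq 1$. For $N \geq 3$ (so $c > 2$), substitution together with the bound $b \leq 2/c$ reduces the inequality to $(c - (N-1))^2 \geq 3 - N$, which is automatic since the right-hand side is nonpositive.

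With both hypotheses of Corollary~\ref{coro:2} verified, we conclude $f \in_d I^M_T$. The only real obstacle is deciding which of the two bounds on $b$ to invoke: $b \leq 1$ is the tight one in the low-$c$ regimes $N \leq 2$, whereas $b \leq 2/c$ is what one needs for $N \geq 3$; the case split $N \leq 2$ versus $N \geq 3$ separates these neatly, and in each regime the remaining arithmetic is a short calculation.
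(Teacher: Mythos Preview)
Your proof is correct, and it takes a genuinely different route from the paper's. The paper splits on whether $\int_0^T f = 1$ or $\int_0^T f < 1$. In the first case, it observes that $cb/2 = 1$ with $b \le T \le 1$ forces $c \ge 2$, hence $N \ge 3$ (actually it asserts $T<1$ to get the strict inequality), and then verifies condition~\eqref{eq:jm3} of Corollary~\ref{coro:2} via the integral identity $\int_0^N f^{-1}(t)\,\mathrm{d}t = \int_0^T f(x)\,\mathrm{d}x = 1$ together with the linearity relation $f^{-1}(0)+f^{-1}(2) = \int_0^2 f^{-1}(t)\,\mathrm{d}t$. In the second case it bypasses Corollary~\ref{coro:2} entirely, writing $f$ as a mixture of a normalized linear density (falling under the first case) and the zero density in $E^M_T$.

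Your approach instead applies Corollary~\ref{coro:2} uniformly across all subprobability masses by splitting on $N$ rather than on $\int f$. This is arguably cleaner: it avoids the mixture detour, and it handles the low-$N$ boundary cases ($N\le 2$) explicitly via the crude bound $b\le 1$, where the paper's argument would stumble (its displayed chain uses $N \ge 3$ to make $\sum_{i=3}^N f^{-1}(i)$ nonempty and meaningful). What the paper's argument buys, on the other hand, is that the integral identity makes the $N\ge 3$ verification a one-liner, with no quadratic algebra. One small point you glossed over: in the $N\ge 3$ step you replace $b$ by its upper bound $2/c$, which is only valid because the expression $b(N-1) - \tfrac{b}{2c}(N-2)(N+1)$ is increasing in $b$; this holds since the coefficient $(N-1) - \tfrac{(N-2)(N+1)}{2c}$ is positive for $c > N-1$ (its numerator $2c(N-1)-(N-2)(N+1)$ exceeds $2(N-1)^2 - (N-2)(N+1) = N^2-3N+4 > 0$), but it is worth saying.
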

\begin{proof}
Obviously the slope of the linear function $f$ on its support is not zero.
\begin{enumerate}%[(a)]
\item $\int_0^T f(x) \mathrm{d} x=1$.
In this case, $f$ is convex on $[0,T]$. We only need to verify \eqref{eq:jm3} in Corollary \ref{coro:2}.  Since $T< 1$ and since  $f$ integrates to 1, we have $N\geq 3$.
Note that, from integration by parts and change of variables, $\int_0^N f^{-1}(t)\mathrm{d} t=\int_0^T f(x)\mathrm{d} x =1$.
It follows from the linearity of $f$ that
\begin{align*}
\sum_{i=0}^N f^{-1}(i)-f^{-1}(1)&=\sum_{i=3}^N f^{-1}(i)+f^{-1}(0)+f^{-1}(2)\\
&=\sum_{i=3}^N f^{-1}(i)+ \int_0^2 f^{-1}(t)\mathrm{d} t\\&\leq \int_2^N f^{-1}(t)\mathrm{d} t+\int_0^2 f^{-1}(t)\mathrm{d} t  = 1.
\end{align*}
The desired result follows from Corollary \ref{coro:2}.
\item $\int_0^T f(x)\mathrm{d} x<1$.  This case can be obtained   from a mixture of (a) and $g\in_d E^M_T$ where $g:[0,T]\to \{0\}$. \qedhere
\end{enumerate}
\end{proof}

When $\int_0^T f(x)\mathrm{d} x<1$, we obtain a sufficient condition for $f\in_d A_M^T$ to be $f\in_d I^M_T$  using Proposition \ref{prop:jm} together with a result in \cite{embrechts2015aggregation}.
\begin{corollary}
For any $f\in_d A^M_T$, let $N=\lceil f(0+) \rceil$. Then $f \in_d I^M_T$ if
$$\max_{i=1,\dots,N}\{f^{-1}(i-1)-f^{-1}(i)\} \le 1-\int_0^T f(x)\mathrm{d} x.$$
\end{corollary}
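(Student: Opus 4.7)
The plan is to reduce everything to Proposition \ref{prop:jm}: it suffices to produce a coupling $\mathbf X=(X_1,\dots,X_N)$ with $X_i\sim F_i$ and $\mathbb P(\sum_{i=1}^N X_i\le 1)=1$, where the marginals $F_i$ are those defined in \eqref{eq:jm1}.

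First I would record the structural facts about these marginals. Each $F_i$ is supported on the interval $[f^{-1}(i),\,f^{-1}(i-1)]$, so its support has length $f^{-1}(i-1)-f^{-1}(i)$, and the computation in \eqref{eq:jm2} gives
$$\sum_{i=1}^N \mathbb E[X_i]=\sum_{i=1}^N\int_0^T(1-F_i(x))\,\mathrm dx = \int_0^T f(x)\,\mathrm dx.$$
These are the only two quantities that will enter the estimate.

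Next I would invoke an aggregation bound from \cite{embrechts2015aggregation}: for any marginal distributions supported on bounded intervals one can construct a coupling satisfying
$$\mathrm{ess\mbox{-}sup}\left(\sum_{i=1}^N X_i\right)\le \sum_{i=1}^N \mathbb E[X_i] + \max_{i=1,\dots,N}\{f^{-1}(i-1)-f^{-1}(i)\}.$$
Combining this with the previous display and the hypothesis of the corollary yields
$$\mathrm{ess\mbox{-}sup}\left(\sum_{i=1}^N X_i\right)\le \int_0^T f(x)\,\mathrm dx + \Bigl(1-\int_0^T f(x)\,\mathrm dx\Bigr)=1,$$
so $\mathbb P(\sum_{i=1}^N X_i\le 1)=1$, and Proposition \ref{prop:jm} delivers $f\in_d I^M_T$.

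The main obstacle is selecting the correct form of the Embrechts–Wang–Wang coupling bound and handling the degenerate/infinite case. The argument is in the same spirit as in the proof of Corollary \ref{coro:2}, but there convexity of $f$ made the density of each $F_i$ decreasing, so the equality-form bound of \cite{jakobsons2015general} was available; here $f$ is only assumed to be c\`adl\`ag and decreasing, and one must fall back on the more general aggregation inequality, whose conclusion is weaker but still suffices thanks to our explicit slack $1-\int_0^T f$. For $N=\infty$, the sum $\sum_{i=1}^N X_i$ is well-defined because $X_i\ge 0$ and $\sum_{i=1}^N\mathbb E[X_i]=\int_0^T f\le 1$, and the passage from finite to countable $N$ is handled by the same compactness argument noted in the remark following Corollary \ref{coro:2}.
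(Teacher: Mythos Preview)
Your proposal is correct and follows essentially the same route as the paper: identify the support and mean of each $F_i$, invoke the aggregation bound from \cite{embrechts2015aggregation} (the paper cites Corollary A.3 there, which gives the two-sided concentration $\mathbb P(|\sum_i X_i-\sum_i\mathbb E[X_i]|\le L)=1$, but only the upper side is used, exactly as you wrote), and then apply Proposition \ref{prop:jm}. Your additional remarks on the $N=\infty$ case and the contrast with Corollary \ref{coro:2} are accurate and match the paper's treatment.
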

\begin{proof}
Let $F_i,~i=1,\dots, N$ be as in \eqref{eq:jm1}. Apparently $F_i$ is supported in $[f^{-1}(i),f^{-1}(i-1)]$ for each $i=1,\dots, N$. Denote $L=\max\{f^{-1}(i-1)-f^{-1}(i):i=1,\dots,N\}$.
From Corollary A.3 of \cite{embrechts2015aggregation}, there exists a random vector $\mathbf X=(X_1,\dots,X_N)$ such that $X_i\sim F_i$, $i=1,\dots,N$ and $$\mathbb{P}\left(\left|\sum_{i=1}^N X_i-\sum_{i=1}^N\mathbb{E}\left[ X_i\right]\right|\le L\right)=1.$$
From \eqref{eq:jm2}, we have   $\sum_{i=1}^N\mathbb{E}\left[ X_i\right]=\int_0^T f(x)\mathrm{d} x$ and therefore,
$$\mathbb{P}\left( \sum_{i=1}^N X_i \le 1\right)\geq \mathbb{P}\left( \sum_{i=1}^N X_i \leq  L+\int_0^T f(x) \mathrm{d} x\right)=1.$$
\end{proof}

\section*{Acknowledgements} The authors would like to thank two anonymous referees for their insightful comments which were very helpful in improving the quality and the presentation of the paper. Jie Shen acknowledges financial support from the China Scholarship Council. Yi Shen and Ruodu Wang acknowledge financial support from the Natural Sciences and Engineering Research Council of Canada (RGPIN-2014-04840 and RGPIN-435844-2013, respectively).

\bibliographystyle{apalike}
\bibliography{Bibtexp}

\begin{thebibliography}{}

\bibitem[Alili et~al., 2005]{alili2005representations}
Alili, L., Patie, P., and Pedersen, J.~L. (2005).
\newblock Representations of the first hitting time density of an
  ornstein-uhlenbeck process 1.
\newblock {\em Stochastic Models}, 21(4):967--980.

\bibitem[Breiman, 1967]{breiman1967}
Breiman, L. (1967).
\newblock First exit times from a square root boundary.
\newblock In {\em Proceedings of the Fifth Berkeley Symposium on Mathematical
  Statistics and Probability, Volume 2: Contributions to Probability Theory,
  Part 2}, pages 9--16, Berkeley, Calif. University of California Press.

\bibitem[Embrechts et~al., 2015]{embrechts2015aggregation}
Embrechts, P., Wang, B., and Wang, R. (2015).
\newblock Aggregation-robustness and model uncertainty of regulatory risk
  measures.
\newblock {\em Finance and Stochastics}, 19(4):763--790.

\bibitem[Jakobsons et~al., 2016]{jakobsons2015general}
Jakobsons, E., Han, X., and Wang, R. (2016).
\newblock General convex order on risk aggregation.
\newblock {\em Scandinavian Actuarial Journal}, 2016(8):713--740.

\bibitem[Kifer, 1988]{kifer1988ergodic}
Kifer, Y. (1988).
\newblock {\em Ergodic theory of random transformations.}
\newblock Birkhauser Boston, Inc., Dunod.

\bibitem[Kim and Pollard, 1990]{kim1990}
Kim, J. and Pollard, D. (1990).
\newblock Cube root asymptotics.
\newblock {\em The Annals of Statistics}, 18(1):191--219.

\bibitem[Leadbetter et~al., 1983]{leadbetter1983extremes}
Leadbetter, M.~R., Lindgren, G., and Rootz{\'e}n, H. (1983).
\newblock {\em Extremes and related properties of random sequences and
  processes}.
\newblock Springer Science \& Business Media.

\bibitem[Leblanc et~al., 2000]{leblanc2000correction}
Leblanc, B., Renault, O., and Scaillet, O. (2000).
\newblock A correction note on the first passage time of an ornstein-uhlenbeck
  process to a boundary.
\newblock {\em Finance and Stochastics}, 4(1):109--111.

\bibitem[Lindgren, 2012]{lindgren2012stationary}
Lindgren, G. (2012).
\newblock {\em Stationary stochastic processes: theory and applications}.
\newblock CRC Press.

\bibitem[Parthasarathy and Varadhan, 1964]{parthasarathy1964extension}
Parthasarathy, K.~R. and Varadhan, S.~S. (1964).
\newblock Extension of stationary stochastic processes.
\newblock {\em Theory of Probability \& Its Applications}, 9(1):65--71.

\bibitem[Pimentel, 2014]{pimentel2014}
Pimentel, L. P.~R. (2014).
\newblock On the location of the maximum of a continuous stochastic process.
\newblock {\em Journal of Applied Probability}, 51(1):152--161.

\bibitem[Samorodnitsky and Shen, 2012]{samorodnitsky2012distribution}
Samorodnitsky, G. and Shen, Y. (2012).
\newblock Distribution of the supremum location of stationary processes.
\newblock {\em Electronical Journal of Probability}, 17(42):1--17.

\bibitem[Samorodnitsky and Shen, 2013a]{samorodnitsky2013intrinsic}
Samorodnitsky, G. and Shen, Y. (2013a).
\newblock Intrinsic location functionals of stationary processes.
\newblock {\em Stochastic Processes and their Applications},
  123(11):4040--4064.

\bibitem[Samorodnitsky and Shen, 2013b]{samorodnitsky2013location}
Samorodnitsky, G. and Shen, Y. (2013b).
\newblock Is the location of the supremum of a stationary process nearly
  uniformly distributed?
\newblock {\em The Annals of Probability}, 41(5):3494--3517.

\bibitem[Shen, 2016]{shen2016random}
Shen, Y. (2016).
\newblock Random locations, ordered random sets and stationarity.
\newblock {\em Stochastic Processes and their Applications}, 126(3):906--929.

\bibitem[Wang and Wang, 2016]{wang2016joint}
Wang, B. and Wang, R. (2016).
\newblock Joint mixability.
\newblock {\em Mathematics of Operations Research}, 41(3):808--826.

\bibitem[Wang et~al., 2013]{wang2013bounds}
Wang, R., Peng, L., and Yang, J. (2013).
\newblock Bounds for the sum of dependent risks and worst value-at-risk with
  monotone marginal densities.
\newblock {\em Finance and Stochastics}, 17(2):395--417.

\end{thebibliography}

\end{document}